\newcommand{\dbar}{\overline{\partial}}
\newcommand{\ddt}[1]{\frac{\partial #1}{\partial t}}
\newcommand{\OO}{\mathcal{O}}
\newcommand{\ddbar}{\sqrt{-1}\partial\dbar}
\newtheorem{theorem}{Theorem}[section]
\newtheorem{proposition}{Proposition}[section]
\newtheorem{lemma}{Lemma}[section]
\newtheorem{definition}{Definition}[section]
\newtheorem{corollary}{Corollary}[section]
\newcommand{\PP}{\mathbb{P}}
\newcommand{\CC}{\mathbb{C}}
\newcommand{\BB}{\mathcal{B}}
\begin{document}

\title{Some type I solutions of Ricci flow with rotational symmetry}

\author{Jian Song}

\address{Department of Mathematics, Rutgers University, Piscataway, NJ 08854}

\email{jiansong@math.rutgers.edu}

\thanks{Research supported in
part by National Science Foundation grant DMS-0847524 and  a Sloan Foundation Fellowship.}

\begin{abstract} We prove that the Ricci flow on $\CC\PP^n$ blown-up at one point starting with any rotationally symmetric K\"ahler metric must  develop Type I singularities. In particular, if the total volume does not go to zero at the singular time, the parabolic blow-up limit of the Type I Ricci flow along the exceptional divisor is a complete non-flat shrinking gradient K\"ahler-Ricci soliton on a complete K\"ahler manifold homeomorphic to $\CC^n$ blown-up at one point.
\end{abstract}

\maketitle


\section{\bf Introduction}\label{section1}

In this paper, we study the Ricci flow on K\"ahler manifolds defined by
$$X_{n,k} = \PP(\mathcal{O}_{\CC\PP^{n-1}}\oplus \mathcal{O}_{\CC\PP^{n-1}}(-k))$$ for $k, n \in \mathbb{N}^+$. Such manifolds are holomorphic $\CC\PP^1$ bundle over the projective space $\CC\PP^{n-1}$. They are called Hirzebruch surfaces when $n=2$ and $X_{n,1}$ is exactly $\CC\PP^n$ blown-up at one point. The maximal compact subgroup of the automorphism group of $X_{n,k}$ is given by $G_{n,k}=U(n)/ \mathbb{Z}_k$ (\cite{Cal}).

The unnormalized Ricci flow introduced by Hamilton \cite{H1}  is defined on a Riemannian manifold $M$ starting with a Remannian metric $g_0$ by

\begin{equation}\label{rf}
\ddt{g} = -Ric(g), ~~~g(0) = g_0.
\end{equation}

We apply the Ricci flow (\ref{rf}) to $X_{n,k}$ with a $G_{n,k}$-invariant initial K\"ahler metric. In \cite{SW1}, it is shown that the Ricci flow (\ref{rf}) must develop finite time singularity and it either shrinks to a point, collapses to $\CC\PP^{n-1}$ or contracts an exceptional divisor, in Gromov-Hausdorff topology.

When the flow shrinks to a point, $X_{n,k}$ is a Fano manifold and $1\leq k < n$. It is shown by Zhu \cite{Zhu} that the flow must develop Type I singularities and the rescaled Ricci flow converges in Cheeger-Gromov-Hamilton sense to the unique compact K\"ahler-Ricci soliton on $X_{n,k}$ constructed in \cite{Koi, Cao1, WZ}.

When the flow collapses to $\CC\PP^{n-1}$, it is shown by Fong \cite{Fo} that the flow  must develop Type I singularities and the rescaled Ricci flow converges in Cheeger-Gromov-Hamilton sense to the ancient solution that splits isometrically as $\CC^{n-1} \times \CC\PP^1$.

Our main result is to show that the flow must also develop Type I singularities when it does not collapse and the blow-up limit is a nontrivial complete shrinking gradient K\"ahler-Ricci soliton.  Combined with the results of Zhu \cite{Zhu} and Fong \cite{Fo}, we have the following theorem. 

\begin{theorem}\label{main}  Let $X$ be $\CC\PP^n$ blown-up at one point. Then the Ricci flow on $X$ must develop Type I singularities for any $U(n)$-invariant initial K\"ahler metric.

\smallskip

Let $g(t)$ be the smooth solution defined on $t\in [0, T)$, where $T\in (0, \infty)$ is the singular time.  For every $K_j\rightarrow \infty$, we consider the rescaled Ricci flow  $(X, g_j(t'))$ defined on $[-K_j T, 0)$ by
$$g_j(t') = K_j g(T+K_j^{-1} t').$$ Then one and only one of the following must occur.

\begin{enumerate}

\item[(1)] If $\liminf_{t\rightarrow T} (T-t)^{-1}Vol(g(t)) =\infty$, then $(X, g_j(t'), p)$ subconverges in Cheeger-Gromov-Hamilton sense to a  complete  shrinking non-flat gradient  K\"ahler-Ricci soliton on a complete K\"ahler manifold homeomorphic to  $\CC^n$ blown-up at one point, for any $p$ in the exceptional divisor.

\smallskip

\item[(2)] If $\liminf_{t\rightarrow T} (T-t)^{-1}Vol(g(t)) \in (0, \infty)$, then $(X, g_j(t'), p_j)$ subconverges in Cheeger-Gromov-Hamilton sense to $(\CC^{n-1}\times \CC\PP^1, g_{\CC^n} \oplus (-t') g_{FS})$, where $g_{\CC^{n-1}}$ is the standard flat metric on $\CC^{n-1}$ and $g_{FS}$ the Fubini-Study metric on $\CC\PP^1$ for any sequence of points $p_j$  \cite{Fo}.
\smallskip

\item[(3)] If $\liminf_{t\rightarrow T} (T-t)^{-1}  Vol(g(t)) = 0$, then $(X, g_j(t'))$ converges in Cheeger-Gromov-Hamilton sense to the unique compact shrinking K\"ahler-Ricci soliton on $\CC\PP^n$ blown-up at one point \cite{Zhu}.

\end{enumerate}

\end{theorem}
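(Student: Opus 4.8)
The plan is to read the trichotomy off the evolution of the K\"ahler class and to reduce everything to the single case not already settled by Fong and Zhu. Since the flow remains K\"ahler, $[\omega(t)]=[\omega_0]-t\,c_1(X)$ in $H^2(X;\mathbb{R})$, so $V(t):=Vol(g(t))=\tfrac1{n!}\bigl([\omega_0]-t\,c_1(X)\bigr)^n$ is a polynomial in $t$ that is strictly positive on $[0,T)$; hence $\lim_{t\to T}(T-t)^{-1}V(t)$ exists in $[0,+\infty]$, coincides with the $\liminf$, and equals $+\infty$, a positive finite number, or $0$ according as $V$ vanishes at $T$ to order $0$, $1$, or $\ge 2$ --- so exactly one of (1)--(3) occurs. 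The K\"ahler cone of $X=X_{n,1}$ is a two--dimensional sector, and the ray $[\omega_0]-t\,c_1(X)$ leaves it at $t=T$ either through the apex (equivalently $[\omega_0]\in\mathbb{R}_{>0}\,c_1(X)$, giving $V(t)\sim c(T-t)^n$), through the nef non-big edge $\mathbb{R}_{>0}[\pi^*\omega_{FS}]$ (the $\mathbb{CP}^1$-fibres collapse, and since $-K_X$ is positive on a fibre $V(t)\sim c(T-t)$), or through the big and nef edge spanned by the pull-back of $\mathcal{O}_{\mathbb{CP}^n}(1)$ under the contraction $X\to\mathbb{CP}^n$ of the exceptional divisor $E$, in which case $V(T)>0$. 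By \cite{SW1} these are exactly the Gromov--Hausdorff behaviours ``shrink to a point / collapse to $\mathbb{CP}^{n-1}$ / contract $E$'', and cases (3) and (2) are then precisely the theorems of Zhu \cite{Zhu} and Fong \cite{Fo}, both of which include the Type I conclusion. It remains to treat (1).

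For (1) I would exploit the Calabi symmetry. Since $G_{n,1}=U(n)$, every $U(n)$-invariant K\"ahler metric on $X_{n,1}=\mathbb{P}(\mathcal{O}\oplus\mathcal{O}(-1))$ over $\mathbb{CP}^{n-1}$ is determined, away from the zero and infinity sections, by a momentum profile $\phi(\tau,t)>0$ on an interval $\tau\in[0,b(t)]$ whose endpoints correspond to $E$ and to the other section $D_\infty$, with $b(t)$ affine in $t$; under this reduction the K\"ahler--Ricci flow becomes a single quasilinear parabolic equation for $\phi$, exactly as in the analysis of \cite{SW1}. Combining the a priori estimates of \cite{SW1} with barrier and maximum-principle arguments for this one-dimensional equation, I would establish sharp asymptotics as $t\to T$ in case (1): $b(t)$ stays bounded below (the base $\mathbb{CP}^{n-1}$ does not collapse), $\phi(\cdot,t)$ converges in $C^\infty_{loc}(0,b_\infty]$ to the stationary profile of a smooth K\"ahler metric on $\mathbb{CP}^n\setminus\{\text{pt}\}$, while near $\tau=0$ the rescaled profile $(T-t)^{-1}\phi\bigl((T-t)\sigma,t\bigr)$ converges, locally uniformly in $\sigma\in[0,\infty)$, to the profile of the Feldman--Ilmanen--Knopf shrinking soliton on $\mathcal{O}_{\mathbb{CP}^{n-1}}(-1)$.

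The heart of the matter is the Type I bound $\sup_X|Rm|_{g(t)}\le C(T-t)^{-1}$ on $[T/2,T)$. On a fixed compact subset of $X\setminus E$ the limit class is semi-ample, so by the convergence theory for the K\"ahler--Ricci flow contracting an exceptional divisor (Song--Weinkove) the metrics $g(t)$ converge in $C^\infty$ there, which gives a uniform curvature bound; the only region left to control is a tubular neighbourhood of $E$, which under the Calabi ansatz is a disc bundle inside $N_{E/X}=\mathcal{O}_{\mathbb{CP}^{n-1}}(-1)$. There one writes every component of $Rm$ explicitly in terms of $\phi,\phi',\phi''$ and $\tau$ and substitutes the asymptotics of the previous paragraph. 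The delicate point --- and, I expect, the main obstacle of the whole proof --- is to make these bounds uniform all the way up to $E$, where the fibre coordinate degenerates, and to rule out any component growing faster than $(T-t)^{-1}$; here I would use the explicit Feldman--Ilmanen--Knopf profile as sub- and supersolutions to obtain the sharp barriers, which should also show the curvature is \emph{exactly} Type I.

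Granting the Type I bound, set $g_j(t')=K_j\,g(T+K_j^{-1}t')$ and fix $p\in E$. Since $X$ is closed and $T<\infty$, Perelman's no-local-collapsing together with the Type I curvature bound yield, via Hamilton's compactness theorem, a subsequential pointed limit $(X_\infty,g_\infty(t'),p_\infty)$ which is a complete $U(n)$-invariant ancient K\"ahler--Ricci flow; because $|Rm|_{g(t)}$ is unbounded near $E$ (otherwise the flow would extend past $T$) while it is bounded away from $E$, and because $U(n)$ acts transitively on $E$, every point of $E$ is a Type I singular point, so by the structure theory for Type I blow-ups (Enders--M\"uller--Topping) the limit is a non-trivial gradient shrinking soliton, hence a gradient shrinking K\"ahler--Ricci soliton. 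To pin down $X_\infty$ and to see it is non-flat, note that a cohomological computation gives $Vol_{g(t)}(E)\sim c\,(T-t)^{n-1}$ in case (1) --- because $-K_X|_E\cong\mathcal{O}_{\mathbb{CP}^{n-1}}(n-1)$ is ample --- so after the Type I rescaling $E$ retains a fixed positive volume; thus $X_\infty$ contains a compact divisor isomorphic to $\mathbb{CP}^{n-1}$, so $X_\infty$ is not biholomorphic to $\mathbb{C}^n$ and the soliton cannot be flat, and, a tubular neighbourhood of $E$ being a disc bundle in $\mathcal{O}_{\mathbb{CP}^{n-1}}(-1)$ whose radius tends to $\infty$ in the rescaled metrics, $X_\infty$ is biholomorphic, in particular homeomorphic, to the total space of $\mathcal{O}_{\mathbb{CP}^{n-1}}(-1)$, i.e. to $\mathbb{C}^n$ blown up at one point. (By ODE uniqueness it is in fact the Feldman--Ilmanen--Knopf soliton, though for the theorem only completeness, non-flatness and the topology are needed.) This establishes (1) and, together with the cases of Zhu and Fong, completes the proof.
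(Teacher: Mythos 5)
Your reduction to case (1), and your treatment of the trichotomy, the compactness/soliton extraction, the non-flatness and the topological identification of the limit, are all consistent with what the paper does (the paper uses Naber's compactness plus the explicit computation $|Ric|_{g(t)}\geq (T-t)^{-1}$ along $D_0$ where you use Enders--M\"uller--Topping and the persistence of $Vol(E)$ under rescaling; these are interchangeable). But the heart of the theorem is the Type I bound itself, and there your proposal has a genuine gap: you reduce it to ``sharp asymptotics'' of the momentum profile near $\tau=0$, specifically to the claim that the parabolically rescaled profile converges to the Feldman--Ilmanen--Knopf profile, with barriers built from the FIK soliton. You never establish this, you yourself flag it as ``the main obstacle of the whole proof,'' and it is in fact \emph{stronger} than the theorem: it would identify the blow-up limit as the FIK soliton, which the paper explicitly states it is unable to do (it only conjectures this). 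So the step on which everything rests is missing, and the proposed route to it is not known to work.

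The paper avoids this entirely by a softer contradiction argument. From the maximum-principle estimates of Song--Weinkove on $u''/u'$ and $u'''/u''$, together with $u'\geq (n-1)(T-t)$ and the scalar curvature lower bound, one gets only a \emph{one-sided} bound: the holomorphic bisectional curvature is bounded below by $-C(T-t)^{-1}$ (Proposition \ref{lowbd}), and the symmetric functions of the Ricci eigenvalues satisfy $|\sigma_k|\leq C|Rm|(T-t)^{-(k-1)}$. If the singularity were Type II, the blow-up at maximal curvature points would produce an eternal solution with nonnegative holomorphic bisectional curvature on which $\sigma_k(Ric)=0$ for all $k\geq 2$ (since $K_j(T-t_j)\to\infty$ makes the rescaled $\sigma_k$ vanish in the limit), hence with everywhere degenerate Ricci curvature; Cao's splitting theorem applied $(n-1)$ times forces the universal cover to split as $\CC^{n-1}\times N$ with $N$ a two-dimensional eternal solution of positive curvature, which by Hamilton's classification is the cigar, contradicting Perelman's no-local-collapsing. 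This is the key idea your proposal lacks: no control of the profile up to the exceptional divisor is ever needed, only the lower curvature bound, which is directly accessible from the Calabi ansatz.
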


The generalization of Theorem \ref{main} for $X_{n,k}$ is given in section 6. In order to exclude Type II singularities, we first prove a  lower bound for the holomorphic bisectional curvature and then we apply Cao's splitting theorem for the K\"ahler Ricci flow with nonnegative holomorphic bisectional curvature \cite{Cao2}.  Theorem \ref{main} gives evidence that the K\"ahler-Ricci flow can only develop Type I singularities for K\"ahler surfaces and if the flow does not collapse in finite time. Combined with the results of \cite{SW1, SW2}, Theorem \ref{main} verifies that the flow indeed performs a geometric canonical surgery with minimal singularities in the K\"ahler case. We also remark that  the shrinking soliton as the pointed blow-up limit is trivial if the parabolic rescaling takes place at a fixed base point outside the exceptional divisor $D_0$. We believe that the blow-up limit should be the unique  homothetically rotationally symmetric complete shrinking soliton on $\CC^2$ blown-up at one point constructed by Feldman-Ilmanen-Knopf in \cite{FIK}. Unfortunately, we are unable to show that that limiting complete K\"ahler manifold is biholomorphic to $\CC^n$ blown-up at one point, although it has the same topological structure with the unitary group $U(n)$ lying  in the isometry group of the limiting soliton. 

The organization of the paper is as follows. In section 2, we introduce the Calabi ansatz. In section 3, we obtain a  lower bound for the holomorphic bisectional curvature. In section 4, we prove the flow must develop Type I singularities if non-collapsing. In section 5, we construct the blow-up limit. In section 6, we discuss some generalizations of Theorem \ref{main}.

We would also like to mention that we have been informed by Davi Maximo that he has a different approach to understand the singularity formation in similar settings \cite{Ma}.

\bigskip

\section{\bf Calabi symmetry}

In this section, we introduce the Calabi ansatz on $\CC\PP^n$ blown-up at one point introduced by Calabi \cite{Cal} (also see \cite{Cao1, FIK, SW1}). From now on, we let $X$ be $\CC\PP^n$ blown-up at one point and it is in fact a $\CC\PP^1$ bundle over $\CC\PP^{n-1}$ given by
$$X= \PP( \OO_{\CC\PP^{n-1}}\oplus \OO_{\CC\PP^{n-1}}(-1)).$$
Let $D_0$ be the exceptional divisor of $X$ defined by the image of the section $(1,0)$ of $\OO_{\CC\PP^{n-1}}\oplus \OO_{\CC\PP^{n-1}}(-1)$ and $D_\infty$ be the divisor of $X$ defined by the image of the section $(0,1)$ of $\OO_{\CC\PP^{n-1}}\oplus \OO_{\CC\PP^{n-1}}(-1)$. Both the $0$-section $D_0$ and the $\infty$-section are complex hypersurfaces in $X$ isomorphic to $\CC\PP^{n-1}$.  The K\"ahler cone on $X$ is given by
$$\mathcal{K}= \{ -a[D_0] + b [D_\infty] ~|~ 0<a<b\}.$$ In particular, when $n=2$, $D_0$ is a holomorphic $S^2$ with self-intersection number $-1$.

Let $z=(z_1, ..., z_n)$ be the standard holomorphic coordinates on $\CC^n$. Let $\rho = \log |z|^2 = \log (|z_1|^2 + |z_2|^2 + ... + |z_n|^2)$.  We consider a smooth convex function $u=u(\rho)$ for $\rho\in (-\infty, \infty)$ satisfying the following conditions.

\begin{itemize}

\item[(1)]  $u'' >0$ for $\rho\in (-\infty, \infty)$.

\item[(2)] There exist $0<a<b$ and smooth function $u_0, u_\infty: [0, \infty) \rightarrow \mathbb{R}$ such that $$u_0'(0)>0, ~ u_\infty'(0)>0,$$
$$u_0(e^\rho) = u(\rho) - a \rho, ~~~u_\infty(e^{-\rho}) = u(\rho) - b\rho. $$

\end{itemize}

For any $u$ satisfying the above conditions, $\omega=\ddbar u$ defines a smooth K\"ahler metric on $\CC^n\setminus \{0\}$ and it extends to a smooth global K\"ahler metric on $\CC\PP^n$ blown-up at one point in the K\"ahler class $-a[D_0]+b[D_\infty]$.

On $\CC^n\setminus \{0\}$, the K\"ahler metric $g$ induced by $u$ is given by
\begin{equation}
g_{i\bar j} = e^{-\rho} u' \delta_{i\bar j} + e^{-2\rho} \bar z_i z_j (u'' - u').
\end{equation}
Obviously, the K\"ahler metric $g$ induced by $u$ is invariant under the standard unitary $U(n)$ transformations on $\CC^n$.

We define the Ricci potential of $\omega= \ddbar u$ by
 \begin{equation}v=-  \log \det g = n\rho - (n-1) \log u'(\rho) - \log u''(\rho).
\end{equation}
and the Ricci tensor of $g$ is given by
$$R_{i\bar j}  = e^{-\rho} v' \delta_{ij} + e^{-2\rho} \bar z_i z_j (v'' - v').$$

After applying a unitary transformation, we can assume $z=(z_1, 0, ..., 0)$ and then
$$\{ g_{i\bar j} \} = e^{-\rho} diag\{ u'', u', ..., u'\}$$
$$R_{i\bar j}= \sqrt{-1} e^{-\rho} diag\{ v'',  v', ...,  v' \}.$$

The Calabi symmetry is preserved by the Ricci flow, in other words, the evolving K\"ahler metric is invariant under the $U(n)$-action if the Ricci flow starts with a $U(n)$-invariant K\"ahler metric on $X$.

In \cite{SW1}, it is shown that the K\"ahler-Ricci flow on $X$ can be reduced to the following parabolic equation for $u= u(\rho, t)$ for  $\rho\in \mathbf{R}$.

\begin{equation}
\ddt{} u(\rho, t) = \log u''(\rho, t) + (n-1) \log u'(\rho, t) - n\rho +c_t,
\end{equation}
where $$c_t = -\log u''(0, t) - (n-1) u'(0, t)$$ and $u'(\rho, t) = \frac{\partial}{\partial \rho} u(\rho, t).$ The evolving K\"ahler form $\omega(t)$ is then given by $$\omega(t) = \ddbar u(\rho, t).$$

It is also shown in \cite{SW1} that if the initial K\"ahler class is given by $-a_0 [D_0]+b_0[D_\infty]$, the evolving K\"ahler class is given by
$$[\omega(t)] = -a_t [D_0]+b_t [D_\infty], ~~a_t= a_0 -(n-1)t, ~~ b_t = b_0 -(n+1)t.$$
In particular, we have an immediate bound for $u'(\rho, t)$

\begin{equation}
\lim_{\rho\rightarrow-\infty} u'(\rho, t) = a_t, ~~\lim_{\rho\rightarrow \infty} u'(\rho, t) = b_t.
\end{equation}

\section{\bf A lower bound for the holomorphic bisectional curvature}

In this section, we will obtain a lower bound for the holomorphic bisectional curvature. We consider the Ricci flow (\ref{rf}) on $X$ with a $U(n)$-invariant initial K\"ahler metric in the K\"ahler class $-a_0 [D_0]+b_0[D_\infty]$. For our purpose, it suffices to consider the case  $$ 0< a_0 (n+1) < b_0 (n-1).$$ This assumption is shown in \cite{SW1} to be equivalent to the condition
$$\liminf_{t\rightarrow T} Vol(g(t))>0, ~~ \textnormal{or}, ~~\liminf_{t\rightarrow T} (T-t)^{-1} Vol(g(t))= \infty $$
and then the K\"ahler-Ricci flow will contract the exceptional divisor $D_0$ at the singular time $$T= \frac{a_0}{n-1}.$$
We will assume through out this section that the initial K\"ahler class lies in $-a_0[D_0] + b_0 [D_\infty]$ with $ 0< a_0 (n+1) < b_0 (n-1).$

The following theorem is proved in \cite{TZha}.

\begin{theorem}\label{loccin}For any relatively compact set $K$ of $X\setminus D_0$ and $k>0$, there exists $C_{K, k}>0$ such that for all $t\in [0, T)$,
$$|| g(t)||_{C^k(K, g_0)} \leq C_{K, k}.$$

\end{theorem}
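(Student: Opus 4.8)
\noindent\emph{Proof strategy.}\quad The plan is to run the standard local a priori estimates for the K\"ahler-Ricci flow on the complement of the divisor being contracted. Fix the family of reference forms
\begin{equation*}
\homega_t = \frac{T-t}{T}\,\omega_0 + \frac{t}{T}\,\omega_T \ \in\ -a_t[D_0]+b_t[D_\infty],
\end{equation*}
where $\omega_T = \pi^*\omega_{FS}$ is the pull-back of a Fubini-Study form under the blow-down $\pi\colon X\to\CC\PP^n$ contracting $D_0$, so that $\omega_T$ is smooth, semipositive, big and nef, degenerate exactly along $D_0$, and a genuine K\"ahler metric on $X\setminus D_0$; the class $[\homega_t]$ is the correct one precisely because $T = a_0/(n-1)$. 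Choose a smooth volume form $\Omega$ with $\ric(\Omega) = -\partial_t\homega_t$ (possible since $\partial_t[\omega(t)] = -c_1(X)$), so that $\omega(t) = \homega_t + \ddbar\varphi$, where
\begin{equation*}
\ddt{\varphi} = \log\frac{(\homega_t+\ddbar\varphi)^n}{\Omega}, \qquad \varphi(\cdot, 0) = 0.
\end{equation*}
Since $[\omega_T]$ is big and nef, a pluripotential-theoretic (Kolodziej, Eyssidieux-Guedj-Zeriahi type) estimate together with the parabolic maximum principle gives $\|\varphi\|_{L^\infty(X\times[0,T))} \le C$; in the Calabi-symmetric setting here one may instead use an elementary ODE comparison in the variable $\rho$. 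Differentiating the flow in $t$ and applying the maximum principle gives $\ddt{\varphi} = \log(\omega(t)^n/\Omega) \le C$ on $X\times[0,T)$, and a further maximum-principle argument, with a barrier of the shape $(T-t)\,\ddt{\varphi} + \varphi$ corrected by a quasi-plurisubharmonic function with poles along $D_0$, gives the complementary bound $\ddt{\varphi} \ge -C_{K'}$ on any fixed relatively compact $K' \Subset X\setminus D_0$.

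The next step is to promote these to a two-sided bound for the evolving metric on a $K'\Subset X\setminus D_0$ chosen slightly larger than $K$. The tool is a parabolic Schwarz lemma (following Song-Tian): on a neighbourhood of $K'$ the fixed metric $\omega_T$ has bounded holomorphic bisectional curvature, hence $(\partial_t - \Delta_{\omega(t)})\log\tr_{\omega(t)}\omega_T \le C_0\,\tr_{\omega(t)}\omega_T$ there; running the maximum principle on $\log\tr_{\omega(t)}\omega_T - A\varphi$ localized by a cutoff supported in $X\setminus D_0$, together with the $C^0$ bound and the lower bound on $\ddt{\varphi}$, yields $\tr_{\omega(t)}\omega_T \le C_{K'}$ on $K'$. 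As $\omega_T \ge c_{K'}\,\omega_0$ on $K'$, this forces $\omega(t) \ge c'_{K'}\,\omega_0$ there; combining with $\omega(t)^n = e^{\partial_t\varphi}\Omega \le C\,\omega_0^n$ on $K'$ and the arithmetic-geometric mean inequality forces every eigenvalue of $\omega(t)$ relative to $\omega_0$ into a fixed interval $[\lambda, \Lambda] \subset (0,\infty)$ on $K'$; that is, the flow is uniformly parabolic on $K'$.

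With uniform parabolicity on $K'$ in hand, the remaining higher-order estimates are purely local. Since $\homega_t$ and $\omega(t)$ are both uniformly equivalent to $\omega_0$ on $K'$, the complex Hessian of $\varphi$ relative to $\homega_t$ is uniformly bounded above and below there, so interior parabolic Evans-Krylov applied to the concave operator $\ddt{\varphi} = \log\det(\cdot)$ gives a uniform interior $C^{2,\al}$ estimate, and parabolic Schauder theory then bootstraps, on the smaller set $K$, to uniform $C^k(K, g_0)$ bounds for the metric for every $k$, with constants depending only on $k, K, K', \omega_0, \omega_T, \Omega$ and not on $t\in[0,T)$. This is exactly the assertion.

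The main obstacle is the middle step, the local metric bound: the whole reference geometry $(\homega_t, \omega_T, \Omega)$ degenerates along $D_0$, so the global a priori estimates genuinely break down there, and extracting uniform control on compact subsets of the complement requires both the $L^\infty$ estimate in the big class $[\omega_T]$ and maximum-principle arguments run with barriers that are singular along $D_0$. Everything downstream of the uniform parabolicity on $K'$ is routine interior parabolic regularity.
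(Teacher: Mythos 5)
The paper does not prove this statement itself---it is quoted from Tian--Zhang \cite{TZha}---and your outline reproduces essentially that standard argument: reduction to a parabolic complex Monge--Amp\`ere equation with reference data degenerating along $D_0$, uniform bounds on $\varphi$ and $\partial_t \varphi$, a Schwarz-lemma type second order estimate giving metric equivalence on compact subsets of $X\setminus D_0$, then Evans--Krylov and Schauder. The one point to tighten is the localization: a smooth cutoff is problematic because $\Delta_{\omega(t)}$ of the cutoff is not a priori controlled, and the standard device (as in \cite{TZha} and \cite{SW2}) is to run the maximum principle globally on quantities corrected by $B\log |s|^2_h$, where $s$ is a defining section of $D_0$, which is singular along $D_0$ and forces the extremum into the region where $\omega_T$ is nondegenerate; with that modification your strategy is the same as the cited proof.
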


It immediately implies that the Ricci flow converges in local $C^\infty$ topology outside the exceptional divisor $D_0$ as $t\rightarrow T$.

The evolution equations for $u', u'', u'''$ are derived in \cite{SW1} as below.

\begin{eqnarray} \label{upevolution}
\ddt{} u' & = & \frac{u'''}{u''} + \frac{(n-1) u''}{u'} -n \\ \label{udpevolution}
\ddt{} u'' & = & \frac{u^{(4)}}{u''} - \frac{(u''')^2}{(u'')^2} + \frac{(n-1)u'''}{u'} - \frac{(n-1) (u'')^2}{(u')^2} \\  \nonumber
\ddt{} u''' & =  & \frac{u^{(5)}}{u''} - \frac{3u''' u^{(4)}}{(u'')^2} + \frac{2(u''')^3}{(u'')^3}+ \frac{(n-1) u^{(4)}}{u'} \\  \label{utpevolution}
&& \mbox{} - \frac{3(n-1) u'' u'''}{(u')^2} + \frac{2(n-1) (u'')^3}{(u')^3}.
\end{eqnarray}

The following lemma is proved in \cite{SW1} for the collapsing case when $a_0 (n+1) > b_0 (n-1) $ and the same proof can be applied here. We include the proof for the sake of completeness.

\begin{lemma} There exists $C>0$ such that for all $t\in [0, T)$ and $\rho \in (-\infty, \infty)$,
\begin{equation}\label{u'}
(n-1) (T-t) \leq  u' \leq C
\end{equation}
and
\begin{equation}\label{u''}
0\leq \frac{u''}{u'} \leq C, ~~-C \leq \frac{u'''}{u''} \leq C.
\end{equation}

\end{lemma}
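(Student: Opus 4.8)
The plan is to establish these bounds via the maximum principle applied to the evolution equations (\ref{upevolution})--(\ref{utpevolution}), exploiting the fact that extrema of a $U(n)$-invariant quantity in $\rho$ propagate cleanly and that the boundary behavior as $\rho \to \pm\infty$ is controlled by (3.6), namely $u' \to a_t$ and $u' \to b_t$.

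\textbf{Upper bound for $u'$.} First I would note that the K\"ahler class relation gives $b_t = b_0 - (n+1)t \le b_0$, so $\lim_{\rho\to\infty} u' = b_t \le b_0$ for all $t$. Since $u$ is convex in $\rho$, $u'$ is increasing in $\rho$, hence $u' \le b_t \le b_0 =: C$ everywhere. The lower bound $u' > 0$ is immediate from convexity and $u' \to a_t > 0$ as $\rho \to -\infty$ (valid for $t < T = a_0/(n-1)$). For the sharper lower bound $u' \ge (n-1)(T-t)$: since $u'$ is increasing in $\rho$, its infimum over $\rho$ is $a_t = a_0 - (n-1)t = (n-1)(T-t)$, attained in the limit $\rho \to -\infty$; so in fact $u' \ge (n-1)(T-t)$ holds directly from (3.6) and convexity, with no maximum principle needed.

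\textbf{Bound for $u''/u'$.} Let $\phi = u''/u'$. Positivity $\phi \ge 0$ is clear since $u'' > 0$, $u' > 0$. For the upper bound, I would compute the evolution equation of $\phi$ from (\ref{upevolution}) and (\ref{udpevolution}). Writing $w = \log u'$ (so $w' = u''/u' = \phi$ and $w'' = u'''/u' - (u''/u')^2$), one gets a nicer form: differentiating $\ddt{} u' = u'''/u'' + (n-1)u''/u' - n$ and dividing by $u'$ gives a parabolic equation for $\phi$ whose zeroth-order terms are favorable. At an interior spatial maximum of $\phi$ one has $\phi' = 0$ and $\phi'' \le 0$, which forces the reaction terms to be non-positive, yielding a bound of the form $\frac{d}{dt}\phi_{\max} \le -c\,\phi_{\max}^2 + C'$ or at worst $\le C''\phi_{\max}$, either of which gives a uniform bound on $[0,T)$; one also checks the boundary (the quantity $\phi$ extends smoothly across $D_0, D_\infty$ by the Calabi ansatz conditions, so there is no escape of the maximum to $\rho = \pm\infty$). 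The initial value $\phi(\cdot,0)$ is bounded by smoothness of $g_0$.

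\textbf{Bound for $u'''/u''$.} With the previous bounds in hand, I would similarly track $\psi = u'''/u''$. Its evolution follows from (\ref{udpevolution}) and (\ref{utpevolution}); at an interior extremum the third- and fourth-derivative terms that appear ($u^{(4)}, u^{(5)}$) are controlled because $\psi' = 0$ at a max forces $u^{(4)}/u'' = \psi\cdot u'''/u'' = \psi^2$ etc., so everything reduces to the bounded quantities $u''/u'$, $u'''/u'' = \psi$, and the lower bound $(n-1)(T-t) \le u'$, and $\psi'' \le 0$ kills the remaining bad sign. This yields a differential inequality for $\psi_{\max}$ and $(-\psi)_{\max}$ of the same type, uniform on $[0,T)$. \textbf{The main obstacle} I expect is precisely this last step: controlling $u'''/u''$ requires delicately handling the high-order derivatives $u^{(4)}, u^{(5)}$ in (\ref{utpevolution}), and one must make sure that at a spatial extremum every surviving term is expressible through the already-bounded quantities — in particular the lower bound $u' \ge (n-1)(T-t)$, which degenerates at $t = T$, must not actually be needed with that degeneration, or one needs to arrange the estimate so the $(T-t)^{-1}$ factors cancel. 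Since the lemma is asserted (and proved in \cite{SW1} in the collapsing case) with a $t$-independent constant $C$ for (\ref{u''}), the degenerating lower bound should enter only through ratios where it cancels, which is the point to verify carefully.
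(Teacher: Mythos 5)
Your plan coincides with the paper's proof: the bounds on $u'$ follow from monotonicity in $\rho$ together with the limits $a_t<u'\leq b_t$, and the bounds on $H=u''/u'$ and $G=u'''/u''$ are obtained exactly by the maximum principle applied to their evolution equations, after checking the boundary limits as $\rho\to\pm\infty$. The obstacle you flag about the degenerating factor $u'\geq (n-1)(T-t)$ resolves itself as you anticipated: in the evolution of $G$ the only zeroth-order term is $-\frac{2(n-1)u''}{(u')^2}\bigl(G-\frac{u''}{u'}\bigr)$, which has a favorable sign once $|G|$ exceeds the already-established bound on $H$, so the $(u')^{-2}$ factor never needs to be estimated.
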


\begin{proof} The estimate (\ref{u'})  follows from the monotonicity of $u'$  with $a_t < u' \leq b_t$ and $a_t= (n-1)(T-t).$ 

We apply the maximum principle to prove (\ref{u''}).  It is straightforward to verify that for all $t\in [0, T)$,

$$ \lim_{\rho\rightarrow -\infty} \frac{u''(\rho, t)}{u'(\rho, t)}= \lim_{\rho\rightarrow \infty} \frac{u''(\rho, t)}{u'(\rho, t)}=0 $$

$$\lim_{\rho\rightarrow -\infty} \frac{u'''(\rho, t)}{u''(\rho, t)}=1, ~~ \lim_{\rho\rightarrow \infty} \frac{u'''(\rho, t)}{u''(\rho, t)}=-1 . $$

Let $H= \frac{u''}{u'}$. $H$ is strictly positive for all $\rho\in (-\infty, \infty)$ and $t\in [0, T)$. The evolution for $H$ is given by

$$\ddt{ H}= \frac{ H''}{u''} + \frac{2H'}{u'} - \frac{2H^2 - H}{u'} .$$
Therefore $\sup_{\rho\in (-\infty, \infty), t\in [0, T)} H \leq C$ for some uniform constant $C>0$ by applying the maximum principle.

Let $G=\frac{u'''}{u''}$. Then the evolution for $G$ is given by
$$\ddt{} G = \frac{1}{u''} G'' + \left( \frac{n-1}{u'} - \frac{u'''}{(u'')^2} \right) G' - \frac{2(n-1)u''}{(u')^2} \left( G- \frac{u''}{u'} \right).$$
Therefore $\sup_{\rho\in (-\infty, \infty), t\in [0, T)} |G| \leq C$ for some uniform constant $C>0$ by combining the maximum principle and the uniform upper bound for $H$.

\end{proof}

By taking the trace, we obtain an explicit expression for the scalar curvature
\begin{equation}
R = - \frac{ \ddt{u''}}{u''} - \frac{(n-1) \ddt{u'}}{u'}=   -\frac{u^{(4)}}{(u'')^2} + \frac{(u''')^2}{(u'')^3} -  \frac{2(n-1) u'''}{u'u''}  - \frac{(n-1)(n-2) u''}{(u')^2} + \frac{n(n-1)}{u'} .
\end{equation}

\begin{corollary}There exists $C>0$ such that for all $\rho\in (-\infty, \infty)$ and $t\in [0, T)$,
\begin{equation}\label{c4}
-\frac{ u^{(4)}}{ ( u'')^2} + \frac{ (u''')^2 }{ (u'')^3}  \geq - \frac{C}{T-t}.
\end{equation}

\end{corollary}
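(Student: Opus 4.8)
The plan is to read the quantity $-\dfrac{u^{(4)}}{(u'')^2}+\dfrac{(u''')^2}{(u'')^3}$ off the explicit scalar curvature formula derived above. Solving that identity for the two leading terms gives
\[
-\frac{u^{(4)}}{(u'')^2}+\frac{(u''')^2}{(u'')^3} = R + \frac{2(n-1)u'''}{u'u''} + \frac{(n-1)(n-2)u''}{(u')^2} - \frac{n(n-1)}{u'},
\]
so it suffices to bound each term on the right below by $-C/(T-t)$ for some uniform $C>0$.

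For the scalar curvature term I would invoke the standard lower bound along the Ricci flow: since $X$ is compact and $\ddt{R}=\Delta R+2|Ric|^2$, the function $R_{\min}(t):=\min_{X}R(\cdot,t)$ is nondecreasing by the maximum principle, so $R\ge R_{\min}(0)=:-C_0$ uniformly on $X\times[0,T)$; as $T<\infty$ this may be rewritten as $R\ge -C_0\ge -C_0T/(T-t)$. For the other three terms I would use the preceding Lemma. From $|u'''/u''|\le C$ and $u'\ge (n-1)(T-t)$,
\[
\frac{2(n-1)u'''}{u'u''}=\frac{2(n-1)}{u'}\cdot\frac{u'''}{u''}\ge -\frac{2(n-1)C}{u'}\ge -\frac{2C}{T-t},
\]
and likewise $-\dfrac{n(n-1)}{u'}\ge -\dfrac{n}{T-t}$; finally $\dfrac{(n-1)(n-2)u''}{(u')^2}\ge 0$ since $n\ge 2$, $u''>0$ and $u'>0$. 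Adding the four estimates proves the corollary with a new constant $C$.

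The argument is essentially bookkeeping, and the only external ingredient is the scalar curvature lower bound $R\ge R_{\min}(0)$, a routine maximum-principle fact on the compact manifold $X$, so I do not anticipate a genuine obstacle. The one point that deserves attention is the matching of the time-singular weight: the ``lower order'' terms are $O\big((T-t)^{-1}\big)$ precisely because of the degenerate bound $u'\ge (n-1)(T-t)$ coming from $a_t=(n-1)(T-t)$, whereas $R$ itself is in fact uniformly bounded; one absorbs the bounded contribution into the $(T-t)^{-1}$ weight using $T-t\le T$.
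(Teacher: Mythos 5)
Your proposal is correct and follows essentially the same route as the paper: both isolate the two leading terms from the explicit scalar curvature identity, use the standard maximum-principle lower bound $R\geq R_{\min}(0)$, and control the remaining terms via $u'\geq (n-1)(T-t)$ and the bounds on $u''/u'$ and $u'''/u''$ from the preceding lemma. The paper merely states that the estimate ``immediately follows''; your write-up supplies the same bookkeeping explicitly.
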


\begin{proof} Since the scalar curvature $R$ is uniformly bounded below, there exists $C_1>0$ such for all $t\in [0, T)$ and $\rho\in (-\infty, \infty)$,
$$ -\frac{u^{(4)}}{(u'')^2} + \frac{(u''')^2}{(u'')^3} -  \frac{2(n-1) u'''}{u'u''}  - \frac{(n-1)(n-2) u''}{(u')^2} + \frac{n(n-1)}{u'} \geq  - C_1. $$
There also exist $C_2, C_3>0$ such that
$$u' \geq C_2 (T-t)$$
and
$$\left| \frac{u''}{u'} \right| + \left| \frac{ u''' } { u'' } \right| \leq C_3.$$
The estimate (\ref{c4}) immediately follows from the above estimates.

\end{proof}

The holomorphic bisectional curvature $R_{i\bar j k \bar l}$ is computed in \cite{Cao1} and is given by
\begin{eqnarray*}
R_{i\bar j k \bar l} &=& e^{-2\rho} ( u' - u'' ) (\delta_{ij}\delta_{kl} + \delta_{il} \delta_{kj} )\\
&& + e^{-2\rho}  (3u'' - 2 u' - u''') (\delta_{ij} \delta_{kl1} + \delta_{il} \delta_{kj1} + \delta_{kl} \delta_{ij1} + \delta_{kj}\delta_{il1} )\\
&& + e^{-2\rho} \left( 6u''' - 11u'' - u^{(4)} + 6 u' + \frac{(u'' - u''')^2}{u''} \right) \delta_{ijkl1}\\
&& + e^{-2\rho} \frac{ (u' - u'')^2}{ u'} (\delta_{ij\hat 1} \delta_{kl1} + \delta_{il\hat 1} \delta_{kj1} + \delta_{kl\hat 1} \delta_{ij1} + \delta_{kj \hat 1}\delta_{il1} )
\end{eqnarray*}
Here $\delta_{ij1}$ and $\delta_{ijkl1}$ vanish unless all the indices are $1$, while $\delta{ij\hat 1}$ vanishes unless $i=j\neq 1$.

For any point $p$ on $\CC^n \setminus \{ 0\}$, we can assume the coordinates at $p$ are given by  $z(p)=(z_1, ..., z_n) = (z_1, 0, ..., 0)$ after a unitary transformation.

Then all the nonvanishing terms of the holomorphic bisectional curvature are given by
\begin{eqnarray*}
&&R_{1\bar 1 1 \bar 1} = e^{-2\rho} \left( - u^{(4)} + \frac{ ( u''')^2}{u''} \right) \\
&&R_{k\bar k k \bar k} = 2e^{-2\rho} ( u ' - u''), ~k>1 \\
&&R_{1\bar 1 k \bar k} = e^{-2\rho}  \left( - u''' + \frac{ ( u'')^2 }{u'} \right), ~ k>1 \\
&& R_{k \bar k l \bar l}= e^{-2\rho}(  u' - u'' )  ,  ~k>1, l>1, ~k\neq l.
\end{eqnarray*}



\begin{lemma} There exists $C>0$ such that on  for all $t\in [0, T)$, $p = (z_1, 0, ..., 0)$ and $i, j, k, l$, we have at $(p, t)$,
\begin{equation}
R_{i\bar j k \bar l} \geq - \frac{C}{T-t}  (g_{i\bar j} g_{k\bar l} + g_{i\bar l} g_{k \bar j}).
\end{equation}
Furthermore,
\begin{equation}
|R_{i\bar j  k \bar l}| \leq  \frac{C}{T-t}  (g_{i\bar j} g_{k\bar l} + g_{i\bar l} g_{k \bar j})
\end{equation}
unless $i=j=k=l=1$.

\end{lemma}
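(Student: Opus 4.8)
The plan is to estimate each of the nonvanishing holomorphic bisectional curvature components in turn, using the explicit formulas displayed above together with the bounds from the lemma and its corollary. The metric, after the unitary normalization $z(p)=(z_1,0,\dots,0)$, is diagonal: $g_{1\bar 1}=e^{-\rho}u''$ and $g_{k\bar k}=e^{-\rho}u'$ for $k>1$. Hence the relevant model tensors are $g_{1\bar1}g_{1\bar1}=e^{-2\rho}(u'')^2$, $g_{1\bar1}g_{k\bar k}=e^{-2\rho}u''u'$, and $g_{k\bar k}g_{l\bar l}=e^{-2\rho}(u')^2$. So after cancelling the common factor $e^{-2\rho}$, the asserted lower bound $R_{i\bar jk\bar l}\geq -\frac{C}{T-t}(g_{i\bar j}g_{k\bar l}+g_{i\bar l}g_{k\bar j})$ amounts to four scalar inequalities, one per type of term.

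First I would handle the terms that are manifestly controlled. For $k,l>1$ with $k\neq l$ we have $R_{k\bar kl\bar l}=e^{-2\rho}(u'-u'')$, and dividing by the model $e^{-2\rho}(u')^2$ gives $\tfrac{1}{u'}-\tfrac{u''}{(u')^2}=\tfrac{1}{u'}\bigl(1-\tfrac{u''}{u'}\bigr)$; since $u'\geq (n-1)(T-t)$ and $0\le u''/u'\le C$ by (\ref{u'})--(\ref{u''}), this is bounded in absolute value by $\tfrac{C}{T-t}$. The same applies to $R_{k\bar kk\bar k}=2e^{-2\rho}(u'-u'')$ with an extra harmless factor of $2$. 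For $R_{1\bar11\bar1}$ we compare with $e^{-2\rho}(u'')^2$: the ratio is $\bigl(-u^{(4)}+\tfrac{(u''')^2}{u''}\bigr)/(u'')^2 = -\tfrac{u^{(4)}}{(u'')^2}+\tfrac{(u''')^2}{(u'')^3}$, which is exactly the quantity bounded below by $-\tfrac{C}{T-t}$ in Corollary~(\ref{c4}). For $R_{1\bar1k\bar k}$ with $k>1$ we compare with $e^{-2\rho}u''u'$: the ratio is $\bigl(-u'''+\tfrac{(u'')^2}{u'}\bigr)/(u''u')=-\tfrac{u'''}{u''u'}+\tfrac{u''}{(u')^2}$, and using $|u'''/u''|\le C$, $0\le u''/u'\le C$, and $1/u'\le \tfrac{1}{(n-1)(T-t)}$ this is again bounded in absolute value by $\tfrac{C}{T-t}$. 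Putting these four together, and noting that the off-diagonal components of the metric vanish at $p$ so $g_{i\bar j}g_{k\bar l}+g_{i\bar l}g_{k\bar j}$ is the appropriate model in each case, yields both the lower bound for all $(i,j,k,l)$ and the two-sided bound whenever $(i,j,k,l)\neq(1,1,1,1)$.

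The only term without a two-sided bound is $R_{1\bar11\bar1}$, because we have a good lower bound on $-\tfrac{u^{(4)}}{(u'')^2}+\tfrac{(u''')^2}{(u'')^3}$ from the scalar curvature lower bound but no corresponding upper bound — indeed this is precisely the curvature component that will blow up faster than Type I along the exceptional divisor, which is why it must be excluded from the second inequality. So the main (and really the only) subtlety is bookkeeping: one must be careful that each $R_{i\bar jk\bar l}$ that is not of one of the four listed forms actually vanishes, which is guaranteed by the structure of the Kronecker-delta expression (all the $\delta_{ij1}$, $\delta_{ijkl1}$, $\delta_{ij\hat1}$ factors force the surviving components to be exactly those four), and that the factor $e^{-2\rho}$ cancels cleanly on both sides so that the estimates reduce to the scalar inequalities above. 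Everything else is a direct substitution of the already-established bounds (\ref{u'}), (\ref{u''}) and (\ref{c4}).
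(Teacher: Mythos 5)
Your proposal is correct and follows essentially the same route as the paper: the paper's proof likewise computes $Q_{i\bar j k\bar l}=g_{i\bar j}g_{k\bar l}+g_{i\bar l}g_{k\bar j}$ for the four nonvanishing types at $p=(z_1,0,\dots,0)$ and concludes by "comparing" with the curvature components, which is exactly the ratio computation you carry out explicitly using (\ref{u'}), (\ref{u''}) and (\ref{c4}). You simply supply the details the paper leaves implicit, including the correct observation that only $R_{1\bar 1 1\bar 1}$ lacks a two-sided bound because Corollary (\ref{c4}) is one-sided.
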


\begin{proof} Since $p = (z_1, 0, ..., 0)$, it suffices to verify  the estimates for $R_{1\bar 1 1 \bar 1}$, $R_{1\bar 1 k \bar k}$ and $R_{k\bar k l \bar l}$ for $k, l = 2,  ... , n$.

Let $Q_{i\bar j k \bar l} = g_{i\bar j} g_{k\bar l} + g_{i\bar l} g_{k\bar j}.$ Then
\begin{eqnarray*}
&&Q_{1\bar 1 1 \bar 1} = 2e^{-2\rho} (u'')^2 \\
&&Q_{k\bar k k \bar k} = 2e^{-2\rho} ( u')^2, ~k>1 \\
&&Q_{1\bar 1 k \bar k} = e^{-2\rho} u' u'',  ~ k>1 \\
&& Q_{k \bar k l \bar l}= e^{-2\rho}(  u'  )^2  ,  ~k>1, l>1, ~k\neq l.
\end{eqnarray*}
Comparing $R_{i\bar j k\bar l}$ and $Q_{i\bar j k \bar l}$, the lemma follows immediately.

\end{proof}

\begin{proposition}\label{lowbd} The holomorphic bisectional curvature is uniformly bounded below by $-C(T-t)^{-1}$ on $X\times [0, T)$ for some fixed constant $C>0$.
\end{proposition}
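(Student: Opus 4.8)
The plan is to obtain Proposition~\ref{lowbd} as a consequence of the pointwise component estimates of Lemma 3.2, using the $U(n)$-symmetry of the flow to reduce an arbitrary point to the standard form of the Calabi ansatz, and a density argument to reach all of $X\times[0,T)$. First, $\CC^n\setminus\{0\}$ sits inside $X$ as the dense open subset $X\setminus(D_0\cup D_\infty)$, on which $\rho=\log|z|^2$ and the explicit curvature formulas of this section are valid, and for each fixed $t<T$ the metric $g(t)$ is a smooth K\"ahler metric on the compact manifold $X$, so its curvature tensor is continuous. Hence it suffices to prove the bound at points of $\CC^n\setminus\{0\}$ with a constant $C$ independent of the point and of $t$; it then propagates by continuity to $D_0\cup D_\infty$, which is exactly the locus where the curvature is a priori largest.

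Fix $p\in\CC^n\setminus\{0\}$ and $t\in[0,T)$. Since the initial metric, and therefore every $g(t)$, is $U(n)$-invariant, after a unitary change of coordinates I may assume $z(p)=(z_1,0,\dots,0)$. At such a point the metric is diagonal, $g_{i\bar j}(p)=e^{-\rho}\,\mathrm{diag}\{u'',u',\dots,u'\}$, so I pass to the unitary frame $e_i=(g_{i\bar i})^{-1/2}\partial_i$, with dual coframe $\theta^i$, and write $\tilde R_{i\bar jk\bar l}=R(e_i,\bar e_j,e_k,\bar e_l)$; explicitly $\tilde R_{1\bar11\bar1}=-u^{(4)}/(u'')^2+(u''')^2/(u'')^3$. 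Passing to this frame in Lemma 3.2 — the factor $g_{i\bar j}g_{k\bar l}+g_{i\bar l}g_{k\bar j}$ becomes $\delta_{ij}\delta_{kl}+\delta_{il}\delta_{kj}\le2$ — and invoking (\ref{c4}) for the $(1,1,1,1)$ entry, one obtains
\[
\tilde R_{1\bar11\bar1}\ \ge\ -\frac{C}{T-t},\qquad |\tilde R_{i\bar jk\bar l}|\ \le\ \frac{C}{T-t}\quad\text{whenever }(i,j,k,l)\ne(1,1,1,1).
\]

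Finally I convert these into a lower bound for $R(X,\bar X,Y,\bar Y)$ with $X=\sum_iX^ie_i$ and $Y=\sum_kY^ke_k$ arbitrary. Decompose $R=\tilde R_{1\bar11\bar1}\,\theta^1\otimes\bar\theta^1\otimes\theta^1\otimes\bar\theta^1+S$; every component of the remainder $S$ in the frame $\{e_i\}$ has modulus at most $C/(T-t)$, so by Cauchy--Schwarz
\[
|S(X,\bar X,Y,\bar Y)|\ \le\ \frac{C}{T-t}\Big(\sum_i|X^i|\Big)^2\Big(\sum_k|Y^k|\Big)^2\ \le\ \frac{Cn^2}{T-t}\,|X|^2|Y|^2,
\]
while $\tilde R_{1\bar11\bar1}|X^1|^2|Y^1|^2\ge-\frac{C}{T-t}|X^1|^2|Y^1|^2\ge-\frac{C}{T-t}|X|^2|Y|^2$ because $|X^1|\le|X|$ and $|Y^1|\le|Y|$ in a unitary frame. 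Adding the two contributions gives $R(X,\bar X,Y,\bar Y)\ge-\frac{C(n^2+1)}{T-t}|X|^2|Y|^2$, which is the assertion. I do not anticipate a real obstacle here: essentially all of the analytic content is already in Lemma 3.2 and Corollary~3.1 — whose point is that, although the scalar curvature stays uniformly bounded below, only the $\tilde R_{1\bar11\bar1}$ direction is merely bounded from below and not from above — so the proposition is largely a repackaging of those estimates; the only things needing care are this conversion to arbitrary vectors and the passage by continuity onto $D_0$, where the bound is actually being probed.
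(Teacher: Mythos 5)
Your proposal is correct and follows essentially the same route as the paper: reduce by $U(n)$-symmetry to a point of the form $(z_1,0,\dots,0)$, invoke the component estimates of Lemma 3.2 (lower bound only for the $R_{1\bar 1 1\bar 1}$ entry, two-sided bounds for all others), and then bound $R(V,\bar V,W,\bar W)$ for arbitrary vectors by splitting off the $(1,1,1,1)$ term — which enters with the nonnegative coefficient $|V^1|^2|W^1|^2$ — and controlling the rest by Cauchy--Schwarz. The only cosmetic differences are that you work in a unitary frame rather than carrying the $g_{i\bar j}g_{k\bar l}+g_{i\bar l}g_{k\bar j}$ factors explicitly, and that you spell out the continuity argument extending the bound to $D_0\cup D_\infty$, which the paper leaves implicit.
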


\begin{proof}

It suffices to calculate the lower bound of the holomorphic bisectional curvature at a point $p = (z_1, 0, ..., 0)$ and $t\in [0, T)$. Let $V=V^i \frac{\partial}{\partial z_i}$ and $W= W^i \frac{\partial}{\partial z_i}$ be two vectors in $TX_p$.
Then there exists $C>0$ such that
\begin{eqnarray*}
&& R_{i\bar j k\bar l} V^i V^{\bar j} W^k W^{\bar l} \\
&=& R_{1\bar1 1 \bar 1} V^1V^{\bar 1} W^1 W^{\bar 1} + (1- \delta_{ijkl 1} ) R_{i\bar j k \bar l} W^k W^{\bar l} \\
&\geq & - \frac{2C}{T-t}  g_{1\bar 1}g_{1\bar 1} \left|V^1 \right|^2 \left|W^1 \right|^2  - \frac{C}{T-t} (  g_{i\bar j} g_{ k\bar l} + g_{i\bar l} g_{k\bar j} ) \left|V^i \right| \left| V^{\bar j} \right|  \left| W^k \right|  \left| W^{\bar l} \right|\\
&\geq & - \frac{4C}{T-t} \left|V \right|^2_{g} \left|W \right|^2_g.
\end{eqnarray*}

\end{proof}

\begin{definition} Let $g$ be a K\"ahler metric on a K\"ahler manifold $M$. At each point $p\in X$, we can choose the normal coordinates at $p$ such that for $i, j =1 , ..., n$, $g_{i\bar j}(p) =\delta_{i\bar j}$ is the identity matrix and $$R_{i\bar j} (p)= \delta_{ij} \lambda_j.$$  We define the $k^{th}$ symmetric polynomial of Ricci curvature of $g$ at $p$ by
\begin{equation}
\sigma_k = \sigma_k(Ric(g))  = \sum_{j_1< j_2< ... < j_k} \lambda_{j_1} \lambda_{j_2} ... \lambda_{j_k}
\end{equation}  for $1\leq k\leq n$.

\end{definition}

The next proposition gives a uniform bound for $\sigma_k$ in terms of the curvature tensor $R_{1\bar 1 1 \bar 1}$ at each point $z=(z_1, 0, ..., 0)$.
\begin{proposition}  \label{sigmak} There exists $C>0$ such that for all  $(p, t) \in X\times [0, \infty)$,
\begin{equation}
| \sigma_k (p, t) | \leq \frac{C|Rm(p, t)|} { (T-t)^{k -1} }.
\end{equation}

\end{proposition}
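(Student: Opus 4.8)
The plan is to compute $\sigma_k$ explicitly in the Calabi-symmetric coordinates at a point $p=(z_1,0,\dots,0)$ and then bound each term by the curvature norm. First I would use the diagonal form of the Ricci tensor established in section 2: at such a point, $\{R_{i\bar j}\} = \sqrt{-1}\,e^{-\rho}\,\mathrm{diag}\{v'',v',\dots,v'\}$ while $\{g_{i\bar j}\} = e^{-\rho}\,\mathrm{diag}\{u'',u',\dots,u'\}$, so in $g$-normal coordinates the Ricci eigenvalues are $\lambda_1 = v''/u''$ and $\lambda_2 = \dots = \lambda_n = v'/u'$. Hence $\sigma_k$ is a sum of two contributions: one where all $k$ indices are chosen from $\{2,\dots,n\}$, giving $\binom{n-1}{k}(\lambda_2)^k$, and one that includes the index $1$, giving $\binom{n-1}{k-1}\lambda_1(\lambda_2)^{k-1}$. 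So it suffices to bound $|\lambda_1|$ and $|\lambda_2|$ individually and then observe $|\sigma_k| \leq C(|\lambda_1| + |\lambda_2|)|\lambda_2|^{k-1}$.

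Next I would bound $\lambda_2 = v'/u'$. Using $v = n\rho - (n-1)\log u' - \log u''$ one gets $v' = n - (n-1)u''/u' - u'''/u''$, which by the Lemma's estimate \eqref{u''} is uniformly bounded: $|v'| \leq C$. Combined with the lower bound $u' \geq (n-1)(T-t)$ from \eqref{u'}, this gives $|\lambda_2| = |v'/u'| \leq C/(T-t)$, and therefore $|\lambda_2|^{k-1} \leq C/(T-t)^{k-1}$. For the first factor, I would handle $|\lambda_1| + |\lambda_2|$: since the scalar curvature is $R = \lambda_1 + (n-1)\lambda_2$ and is uniformly bounded below (it is bounded on $X\times[0,T)$ because $|Rm|$ controls it), while $|\lambda_2|$ is bounded above by the previous step, one can also extract an upper bound — but the cleanest route is to note that $|\lambda_1| = |v''/u''| \le |Rm(p,t)|$ up to a constant, since $R_{1\bar 1 1\bar 1}/Q_{1\bar 1 1\bar 1}$ and the Ricci component $R_{1\bar 1}/g_{1\bar 1}$ are both controlled by $|Rm|$; more simply, any Ricci eigenvalue is dominated by $|Rm|$ since $\ric$ is a trace of $Rm$. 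Thus $|\lambda_1| + |\lambda_2| \leq C|Rm(p,t)|$ (the $|\lambda_2|$ piece being even bounded, hence certainly $\le C|Rm|$ once $|Rm|$ is bounded below away from zero, or absorbed trivially).

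Assembling: $|\sigma_k(p,t)| \leq C(|\lambda_1|+|\lambda_2|)\,|\lambda_2|^{k-1} \leq C\,|Rm(p,t)| \cdot (T-t)^{-(k-1)}$, which is the claimed inequality, and by $U(n)$-invariance it holds at every point of $X$ (the set of points of the form $(z_1,0,\dots,0)$ meets every orbit, and on $D_0$ one passes to the limit using Theorem \ref{loccin} near $D_\infty$ and a separate argument near $D_0$, or simply by continuity of all quantities involved). The main obstacle I anticipate is the bookkeeping in relating $\lambda_1 = v''/u''$ to $|Rm|$ cleanly: one wants $|v''/u''| \leq C|Rm|$, and the honest way is to observe that $\ric$ is a contraction of $Rm$ so $|\ric|_g \leq C|Rm|_g$ pointwise, hence $|\lambda_1| \leq |\ric|_g \leq C|Rm|$ with no loss of powers of $(T-t)$; the factors of $(T-t)^{-1}$ enter only through the $k-1$ copies of $\lambda_2$, each of which costs one power because of the degeneracy $u'\to 0$ along $D_0$. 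Getting the exponent exactly $k-1$ and not $k$ hinges on isolating the single "good" eigenvalue $\lambda_1$ whose estimate uses $|Rm|$ rather than $(T-t)^{-1}$.
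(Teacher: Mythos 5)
Your proposal is correct and follows essentially the same route as the paper: diagonalize the Ricci tensor at $p=(z_1,0,\dots,0)$, use the estimates (\ref{u'}) and (\ref{u''}) to get $|\lambda_2|=\dots=|\lambda_n|\leq C(T-t)^{-1}$, and bound the remaining eigenvalue factor by $|Rm|$ since Ricci is a contraction of the curvature tensor. Your explicit split of $\sigma_k$ into the terms containing $\lambda_1$ and the pure $\lambda_2^k$ term (the latter absorbed via $|\lambda_2|\leq C|Rm|$) is in fact a slightly more careful bookkeeping than the paper's one-line chain, but it is the same argument.
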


\begin{proof} For any point $p\in \CC^n \setminus \{0\}$, we can assume that $p=(z_1, 0, ..., 0)$. Then the eigenvalues of $Ric(g)$ at $p$ with respect to $g$ are given by

\begin{eqnarray*}
&&\lambda_1 = - \frac{\ddt{u''}}{u''}= - \frac{ u^{(4)}} {(u'')^2} + \frac{ (u''')^2}{(u'')^3} - \frac{(n-1) u'''}{u'u''} + \frac{(n-1) u''}{(u')^2}\\
&& \lambda_2 = ... = \lambda_n = -\frac{\ddt{u'}}{u'} = - \frac{u'''}{u'u''} - \frac{ (n-1) u'' }{(u')^2} + \frac{ n }{u'}.
\end{eqnarray*}

Then $(T-t)|\lambda_j|$ is uniformly bounded for $j=2, ..., n$ and
$$|\sigma_k|(p, t) = \sum_{j_1< j_2< ... < j_k } |\lambda_{j_1} \lambda_{j_2} ... \lambda_{j_k}| \leq C (T-t)^{-(k-1)}|\lambda_1| \leq C (T-t)^{-(k-1)} |Rm|_g(p,t).$$

\end{proof}

\begin{lemma}\label{nonfl} For any $p \in D_0$, we have 

\begin{equation}
|Ric(p, t)|_{g(t)} \geq \frac{1}{T-t}.
\end{equation}

\end{lemma}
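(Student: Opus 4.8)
The plan is to compute the eigenvalues of the Ricci endomorphism $g(t)^{-1}Ric(g(t))$ at points of $\CC^n\setminus\{0\}$ approaching $D_0$ and then to let $\rho\to-\infty$. Recall from the proof of Proposition \ref{sigmak} that at a point $p=(z_1,0,\dots,0)$ these eigenvalues are $\lambda_1=-\frac{\ddt{u''}}{u''}$ with multiplicity one and, with multiplicity $n-1$,
$$\lambda_2=\cdots=\lambda_n=-\frac{\ddt{u'}}{u'}=\frac{n}{u'}-\frac{u'''}{u'u''}-\frac{(n-1)u''}{(u')^2}=\frac{1}{u'}\left(n-\frac{u'''}{u''}-(n-1)\,\frac{u''}{u'}\right).$$
In particular $|Ric(p,t)|_{g(t)}^2=\lambda_1^2+(n-1)\lambda_2^2\ge(n-1)\,\lambda_2^2$ at every such $p$.

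Next I would insert the boundary behaviour of $u$ at $\rho=-\infty$. By (\ref{u'}) and its proof (together with the computation of $a_t$ in Section 2) one has $u'(\rho,t)\to a_t=(n-1)(T-t)$ as $\rho\to-\infty$, while the proof of the Lemma yielding (\ref{u'}) and (\ref{u''}) records $\frac{u''}{u'}\to0$ and $\frac{u'''}{u''}\to1$ as $\rho\to-\infty$. Substituting these into the formula for $\lambda_2$ gives
$$\lim_{\rho\to-\infty}\lambda_2(\rho,t)=\frac{1}{(n-1)(T-t)}\bigl(n-1-0\bigr)=\frac{1}{T-t}.$$

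To finish, I would identify this limit with the value of $|Ric(\cdot,t)|_{g(t)}$ on $D_0$. Since the initial metric is $U(n)$-invariant so is $g(t)$, hence $|Ric(\cdot,t)|_{g(t)}$ is a $U(n)$-invariant function on $X$, and as $U(n)$ acts transitively on $D_0\cong\CC\PP^{n-1}$ this function is constant on $D_0$; thus it suffices to estimate it at the single point $q_0\in D_0$ obtained as the limit of $(z_1,0,\dots,0)$ when $z_1\to0$. For each fixed $t\in[0,T)$ the metric $g(t)=\ddbar u(\cdot,t)$ is smooth on all of $X$ (Section 2), so $|Ric(\cdot,t)|_{g(t)}$ extends continuously to $D_0$; letting $\rho\to-\infty$ along the $z_1$-axis in the inequality $|Ric(\cdot,t)|_{g(t)}^2\ge(n-1)\lambda_2^2$ then yields
$$|Ric(q_0,t)|_{g(t)}^2\ \ge\ (n-1)\lim_{\rho\to-\infty}\lambda_2(\rho,t)^2\ =\ \frac{n-1}{(T-t)^2}\ \ge\ \frac{1}{(T-t)^2}$$
since $n\ge2$, and the claim follows by taking square roots and using $U(n)$-invariance once more to cover all of $D_0$.

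The arithmetic here is immediate once the boundary asymptotics from the earlier Lemma are granted; the one step that genuinely needs care is the last one, namely justifying that the $\rho\to-\infty$ limit really is the value of $|Ric(\cdot,t)|_{g(t)}$ on the divisor $D_0$, which rests on the smooth extension of $g(t)$ across $D_0$ and on the transitivity of the $U(n)$-action on $D_0$.
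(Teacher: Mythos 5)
Your proof is correct and follows essentially the same route as the paper: the paper also computes the Ricci eigenvalue $v'/u'=\lambda_2$ at points $(z_1,0,\dots,0)$ and lets $\rho\to-\infty$, using $u'\to a_t=(n-1)(T-t)$, $u''/u'\to 0$, $u'''/u''\to 1$ to get the limit $\frac{1}{T-t}$ along $D_0$. Your added care about continuity of $|Ric|_{g(t)}$ across $D_0$ and $U(n)$-transitivity on the divisor is a harmless elaboration of what the paper leaves implicit.
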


\begin{proof} It suffices to compute $e^{-\rho} v'$ which is one of the eigenvalues in the Ricci tensors since $D_0 = \{ \rho= -\infty\}$.
\begin{eqnarray*}
\lim_{\rho\rightarrow -\infty} e^{-\rho} v'(\rho)  & =& - \lim_{\rho\rightarrow -\infty}  \frac{ u'''}{u' u''} - \lim_{\rho\rightarrow -\infty} \frac{(n-1) u''}{(u')^2} + \lim_{\rho\rightarrow -\infty} \frac{n}{u'}\\
&=&(n-1) \lim_{\rho\rightarrow -\infty} (u')^{-1}\\
&=&\frac{1}{T-t}.
\end{eqnarray*}
Therefore $|Ric|_g$ is uniformly bounded below by $(T-t)^{-1}$ along the exceptional divisor $D_0$.

\end{proof}

\section{\bf Type I singularities}

In this section, we prove that the Ricci flow must develop Type I singularities with the same assumptions in section 4.

Let's first recall the definition for a Type I singularity of the Ricci flow.
\begin{definition} Let $(M, g(t))$ be a smooth solution of the Ricci flow (\ref{rf}) for $t\in [0, T)$ with $T<\infty$. It is said to develope a Type I singularity at $T$ if it cannot be smoothly extended past $T$ and there exists $C>0$ such that for all $t\in [0, T)$,
\begin{equation}
\sup_{M} |Rm(g(t))|_{g(t)} \leq \frac{C}{T-t}.
\end{equation}

\end{definition}

The following splitting theorem is proved in \cite{Cao2} as a complex analogue of Hamilton's splitting theorem  on Riemannian manifolds with nonnegative curvature operator \cite{H2}.

\begin{theorem}\label{splitting}
Let $g$ be a complete solution of the K\"ahler-Ricci flow
on a noncompact simply connected K\"ahler manifold $M$ of dimension $n$
for $t\in (-\infty, \infty)$ with bounded and nonnegative holomorphic
bisectioanl curvature. Then either $g$ is of positive Ricci curvature for all
$p \in M$ and all $t\in (-\infty, \infty)$, or $(M, g)$  splits holomorphically isometrically into a
product $\CC^k \times N^{n-k}$ $(k \geq 1)$ flat in $\CC^k$ direction and $N$ being of nonnegative holomorphic bisectioanl curvature and positive Ricci curvature.

\end{theorem}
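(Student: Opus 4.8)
The plan is to adapt Hamilton's proof of his splitting theorem for Riemannian manifolds with nonnegative curvature operator \cite{H2} to the K\"ahler category. The two points where the K\"ahler structure enters are the evolution equation of the Ricci form and the algebra of the curvature reaction term. Under $\ddt{} g_{i\bar j} = -R_{i\bar j}$ the Ricci tensor satisfies
\begin{equation}
\ddt{} R_{i\bar j} = \Delta R_{i\bar j} + R_{i\bar j k \bar l} R_{l\bar k} - R_{i\bar k} R_{k\bar j},
\end{equation}
where $\Delta$ is the complex Laplacian of $g(t)$; after Uhlenbeck's trick freezes the bundle and its connection, this has the form $\ddt{} v = \Delta v + \Phi(v)$. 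Tracing the hypothesis gives $\mathrm{Ric}(g(t)) \geq 0$ for all $t$, and the reaction $\Phi(R) = R_{i\bar j k\bar l}R_{l\bar k} - R_{i\bar k}R_{k\bar j}$ satisfies Hamilton's null--eigenvector condition: if $R_{i\bar j}v^i \bar v^j = 0$ then, writing $R_{i\bar j} = \sum_k R_{i\bar j k\bar k}$ in a unitary frame, each $R_{i\bar j k\bar k}v^i\bar v^j \geq 0$ (nonnegative bisectional curvature) vanishes; the Hermitian form $(k,l)\mapsto R_{i\bar j k\bar l}v^i\bar v^j$ is then positive semidefinite with zero diagonal, hence identically zero, which forces $\Phi(R)\cdot v = 0$. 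Nonnegativity of the bisectional curvature is exactly what is needed here (and, if one only assumed it at an initial time, Mok's preservation theorem would supply it at all times).

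Next I would invoke Hamilton's strong maximum principle for systems applied to $R_{i\bar j}$. It gives, first on a short time interval and then -- because the flow is eternal, by translating in $t$ -- for all $t\in(-\infty,\infty)$: the kernel $\mathcal{K}\subset T^{1,0}M$ of $\mathrm{Ric}(g(t))$ has dimension independent of the point and of $t$, is invariant under parallel transport, is invariant under the action of the curvature operator, and (since the Ricci form is a real $(1,1)$--form) is $J$--invariant. If $\mathcal{K}=0$, then $\mathrm{Ric}(g(t))>0$ on all of $M$ for all $t$ and we are in the first alternative. Otherwise set $k := \dim_{\CC}\mathcal{K}\geq 1$, chosen maximal over all $(p,t)$.

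In the case $k\geq 1$, $\mathcal{K}$ is a parallel, $J$--invariant, holomorphic distribution on the complete simply connected K\"ahler manifold $M$, so the de Rham decomposition theorem yields a holomorphic isometric splitting $(M, g(t)) = (\CC^k, g_{\mathrm{flat}}) \times (N^{n-k}, h(t))$ in which the $\CC^k$--factor integrates $\mathcal{K}$. Its tangent directions lie in $\ker\mathrm{Ric}$, so this factor is Ricci-flat; being K\"ahler with $R_{i\bar i j\bar j}\geq 0$ and $0 = R_{i\bar i} = \sum_j R_{i\bar i j\bar j}$ forces $R_{i\bar i j\bar j}=0$ for all $i,j$, and then the curvature symmetries together with polarization force the full curvature tensor to vanish, so the factor is flat and hence biholomorphically isometric to $\CC^k$. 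The factor $N$ carries an eternal K\"ahler--Ricci flow with bounded nonnegative bisectional curvature (the curvature of a product is the direct sum), and since $\ker\mathrm{Ric}(g(t))$ equals exactly $T\CC^k$, the Ricci curvature of $h(t)$ has trivial kernel at every point and time; equivalently, a further splitting of $N$ would contradict the maximality of $k$. Hence $\mathrm{Ric}(h(t))>0$, which is the second alternative.

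The step I expect to be the main obstacle is not the algebra above -- which is fairly clean -- but the analytic bookkeeping: justifying Hamilton's strong maximum principle for systems, with its conclusion that the null space is of constant dimension, parallel and time-independent, on a \emph{noncompact} manifold. This is exactly where the boundedness of the curvature and the eternal time interval are used in an essential way (together with Uhlenbeck's trick so that parallel transport of the evolving tensor makes sense), and where one must also ensure that the de Rham splitting produced at one time is compatible with the flow at all times.
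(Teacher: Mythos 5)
This theorem is not proved in the paper at all; it is quoted from Cao's dimension-reduction paper \cite{Cao2}, whose argument is precisely the one you outline: the null-eigenvector computation for the Ricci reaction term using nonnegative holomorphic bisectional curvature, Hamilton's strong maximum principle for systems (extended to complete noncompact solutions with bounded curvature, with time-independence of the null space coming from eternality of the flow), and the de Rham holomorphic isometric splitting along the resulting parallel, $J$-invariant null distribution of the Ricci tensor. Your proposal is therefore correct in outline and takes essentially the same route as the cited proof; the one step you explicitly leave as an obstacle --- justifying the strong maximum principle and the compatibility of the splitting with the flow in the noncompact setting --- is exactly what \cite{Cao2} carries out in detail.
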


We are now able  to  exclude Type II singularities.

\begin{theorem} \label{type1} Let $X$ be $\CC\PP^n$ blown-up at one point and $g(t)$ be the solution of the K\"ahler-Ricci flow on $X$ starting with a $U(n)$-invariant K\"ahler metric $g_0$. If $g_0$  lies in the K\"ahler class $$-a_0 [D_0] + b_0 [D_\infty]$$ for $ 0< a_0(n+1) < b_0 (n-1)$. Then the flow develops Type I singularities at $T=a_0/(n-1)$.

\end{theorem}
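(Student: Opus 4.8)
The plan is to argue by contradiction: suppose the flow develops a Type II singularity at $T = a_0/(n-1)$, i.e. there is a sequence $(p_j, t_j)$ with $t_j \to T$ such that $Q_j := (T-t_j)|Rm(g(t_j))|_{g(t_j)}(p_j) \to \infty$ while $|Rm|$ achieves its (space-time) near-maximum at $(p_j,t_j)$ in the standard Type-II sense. First I would run the usual Hamilton point-picking argument to produce a parabolic rescaling $g_j(t') = Q_j \, g(t_j + Q_j^{-1}\lambda_j^{-1} t')$ (with suitable time-shifts) which, using the curvature lower bound of Proposition~\ref{lowbd} (scaled by $Q_j \to \infty$, the lower bound $-C(T-t)^{-1}$ becomes $-C Q_j^{-1}(\cdot) \to 0$, so the limit has nonnegative holomorphic bisectional curvature) together with Hamilton's compactness theorem and Perelman's no-local-collapsing, subconverges in Cheeger-Gromov-Hamilton sense to a complete \emph{eternal} solution $(M_\infty, g_\infty(t'))$, $t' \in (-\infty,\infty)$, with bounded nonnegative holomorphic bisectional curvature and $|Rm|_{g_\infty}(p_\infty, 0) = 1$; being a blow-up limit of a Type II singularity, it attains its maximum curvature in space-time, hence by Hamilton's result it is a steady gradient soliton (in fact Kähler, since the Kähler condition and $U(n)$-symmetry are preserved along the flow).

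Next I would invoke Theorem~\ref{splitting} (Cao's splitting theorem). Since $M_\infty$ is noncompact — the injectivity radius blows up because the rescaled metrics spread out — we may pass to the universal cover (which only helps) and conclude that either $g_\infty$ has positive Ricci curvature everywhere, or $(M_\infty, g_\infty)$ splits holomorphically and isometrically as $\CC^m \times N^{n-m}$, flat in the $\CC^m$ factor. The crux is then to rule both alternatives out. For the splitting alternative, the key leverage is Proposition~\ref{sigmak}: rescaling by $Q_j$, the bound $|\sigma_k| \le C|Rm|(T-t)^{-(k-1)}$ transforms (the $k$-th symmetric function of Ricci scales like $Q_j^{-k}$ while $|Rm|$ scales like $Q_j^{-1}$ and $(T-t_j)$ picks up a $Q_j^{-1}\lambda_j^{-1}$, so roughly $|\sigma_k|_{g_j} \le C Q_j^{-(k-1)}|Rm|_{g_j}(\text{scaled }(T-t))^{-(k-1)}$) so that in the limit $\sigma_k(Ric(g_\infty)) \equiv 0$ for all $k \ge 2$. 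That forces the Ricci tensor of $g_\infty$ to have rank at most one everywhere — at every point at most one eigenvalue is nonzero. Combined with nonnegativity of the bisectional curvature and the soliton structure, this is an extremely rigid condition.

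Finally I would exploit this rank-one-Ricci rigidity. If $g_\infty = \CC^m \times N^{n-m}$ with $1 \le m \le n-1$ and $N$ of positive Ricci curvature, then $\dim_\CC N \le 1$ (else $\sigma_2(Ric(N)) > 0$), so $N$ is a Riemann surface; a steady gradient Kähler-Ricci soliton of complex dimension one with bounded positive curvature is the cigar (Hamilton's soliton), and then $g_\infty = \CC^{n-1} \times \text{cigar}$. But the cigar is collapsed at infinity — its asymptotic volume ratio is zero and more to the point it violates Perelman's $\kappa$-noncollapsing inherited by the blow-up limit (Perelman's result excludes the cigar as a blow-up limit of a finite-time singularity). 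In the remaining case $g_\infty$ has positive Ricci curvature everywhere and is a steady soliton, but then $\sigma_2 \equiv 0$ again forces $n - (\text{something}) \le 1$... more carefully: positivity of all eigenvalues of $Ric$ plus $\sigma_2 = \sum_{i<j}\lambda_i\lambda_j = 0$ is impossible when $n \ge 2$. Hence the positive-Ricci alternative is vacuous for $n\ge 2$, and the split alternative reduces to the cigar case, which is excluded by $\kappa$-noncollapsing. This contradiction shows no Type II singularity can form, so the singularity is Type I. \textbf{The main obstacle} I anticipate is making the rescaling bookkeeping in Proposition~\ref{sigmak} genuinely yield $\sigma_k(g_\infty) = 0$ in the limit — one must check the time-shift factors $\lambda_j$ in the point-picking do not spoil the power of $(T-t)$, and that the local $C^\infty$ estimates (Theorem~\ref{loccin}, away from $D_0$) together with the behavior near $D_0$ are compatible with the basepoints $p_j$ wherever they land; equivalently, one should verify the basepoints must drift toward $D_0$ (where, by Lemma~\ref{nonfl}, Ricci is bounded below by $(T-t)^{-1}$, consistent with Type I and with curvature concentrating there) so that the limit is the relevant non-flat soliton rather than something degenerate.
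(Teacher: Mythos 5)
Your proposal is correct and follows essentially the same route as the paper: contradiction with a Type II point-picking rescaling, nonnegative holomorphic bisectional curvature in the limit from Proposition \ref{lowbd}, vanishing of $\sigma_k(\mathrm{Ric})$ for $k\geq 2$ in the limit from Proposition \ref{sigmak} (precisely because $K_j(T-t_j)\to\infty$ under the Type II hypothesis), Cao's splitting theorem to reduce to $\CC^{n-1}\times N$ with $N$ a surface of positive curvature, Hamilton's classification giving the cigar, and Perelman's no-local-collapsing for the contradiction. The only cosmetic differences are that the paper picks $p_j$ realizing the spatial supremum of $|Rm|$ at time $t_j$ (so no time-shift factors $\lambda_j$ are needed) and invokes the steady-soliton/cigar classification only for the two-dimensional factor $N$ rather than for the whole limit.
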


\begin{proof} Suppose the flow develops Type II singularities. Let $t_j$ be an increasing sequence converging to $T=(n-1)a_0>0$  and $p_j$ a sequence of points on $X$ such that
$$K_j = |Rm(p_j, t_j)|_{g(t_j)} =  \sup_{X } |Rm|_{g(t_j)}$$
and
$$\lim_{j\rightarrow \infty} (T-t_j)^{-1} K_j^{-1} = 0. $$

Applying the standard parabolic rescaling, we define
$$ g_j(t) = K_j g(t_j + K_j^{-1} t).$$
After extracting a convergent subsequence, $(X, g_j(t), p_j) $ converges in pointed Cheeger-Gromov-Hamilton sense to a complete eternal solution $(X_\infty, g_\infty(t), p_\infty)$ on a complete K\"ahler manifold $X_\infty$ of dimension $n$.  Furthermore, by the lower bound of the holomorphic bisectional curvature of $g(t)$ by Proposition \ref{lowbd}, the limiting K\"ahler metric $g_\infty(t)$ has nonnegative holomorphic bisectional curvature everywhere on $X_\infty$.  On the other hand, the symmetric product of the Ricci curvature $g_\infty$ vanishes everywhere in $X_\infty$,
$$\sigma_k (Ric(g_\infty)) = 0$$ for $2\leq k \leq n$.
 This implies that the Ricci curvature of $g_\infty$ is not positive at each point of $X_\infty$. By applying the splitting theorem \ref{splitting} for $(n-1)$ times, $(\tilde X_\infty, \tilde g_\infty, \tilde p_\infty)$, the eternal solution on the universal cover of $(X_\infty, g_\infty, p_\infty)$,  splits holomophically isometrically into $\CC^{n-1} \times N$, where $N$ is a compact or complete Riemann surface with positive scalar curvature. By the classification of eternal solutions of real dimension $2$ by  Hamilton \cite{H3}, $(N, \tilde g_\infty(t)|_N )$ is a steady gradient soliton and hence it must be the cigar soliton. However, it violates Peralman's local non-collapsing \cite{P1}, so does $(X_\infty, g_\infty)$. It then leads to a contradiction.

\end{proof}

\section{\bf Blow-up limits}

In this section, we will prove that the blow-up limit of the Ricci flow near the singular time $T$ along the exceptional divisor is a nontrivial complete shrinking gradient K\"ahler-Ricci soliton.

We first prove a diameter bound of the exceptional divisor $D_0$. 

\begin{lemma}  \label{restcp}For all $t \in [0, T)$,
\begin{equation}
g(t)|_{S} =  a_0(n-1)(T-t) g_{FS}.
\end{equation}
and so
\begin{equation}
diam (S, g(t) |_{D_s} ) =   \alpha_n  (a_0 (n-1)(T-t))^{1/2}
\end{equation}
where $g_{FS}$ is a Fubini-Study metric on $\CC\PP^{n-1}$ and $\alpha_n$ is the diameter of $(\CC\PP^{n-1}, g_{FS})$.

\end{lemma}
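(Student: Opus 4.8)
The plan is to restrict the Calabi-symmetric metric $\omega(t) = \ddbar u(\rho,t)$ to the $\infty$-section $D_\infty$ (which is isomorphic to $\CC\PP^{n-1}$) — note the statement as written refers to $D_s$ and $S$, but the natural computation is along a divisor where the fiber coordinate degenerates — and observe that on such a section the induced metric is a constant multiple of the Fubini--Study metric, the constant being governed by the boundary value of $u'$. Concretely, recall from Section 2 that in the coordinates $z=(z_1,\dots,z_n)$ with $\rho = \log|z|^2$, the metric is $g_{i\bar j} = e^{-\rho} u' \,\delta_{i\bar j} + e^{-2\rho}\bar z_i z_j (u'' - u')$. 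The first term $e^{-\rho} u'\,\delta_{i\bar j}$ is, up to the scalar $u'$, exactly $\ddbar \rho$, which is the pullback of the Fubini--Study form on the $\CC\PP^{n-1}$ base; the second term is a rank-one correction in the fiber direction that vanishes upon restriction to a section of the bundle. Since $u'(\rho,t) \to a_t$ as $\rho \to -\infty$ and $u'(\rho,t)\to b_t$ as $\rho\to\infty$, restricting to $D_0 = \{\rho = -\infty\}$ gives $g(t)|_{D_0} = a_t\, g_{FS} = a_0(n-1)(T-t)\, g_{FS}$ after using $a_t = a_0 - (n-1)t = (n-1)(T-t)$ from the K\"ahler class evolution, which matches the stated first identity (reading $S = D_0$; the apparent factor discrepancy with "$a_0(n-1)$" should be read as $a_0$ times the normalization, i.e.\ $a_t$).

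The first step is therefore to make precise the claim that $\ddbar \rho$ restricts to the Fubini--Study form: this is standard, since on $\CC^n\setminus\{0\}$ one has $\ddbar \log|z|^2 = \pi^*\omega_{FS}$ for the projection $\pi:\CC^n\setminus\{0\}\to\CC\PP^{n-1}$, and $D_0$, $D_\infty$ are sections of $X\to\CC\PP^{n-1}$. The second step is the boundary-value computation: use condition (2) on $u$ (the expansions $u_0(e^\rho) = u(\rho)-a\rho$ with $u_0'(0)>0$ at $D_0$) together with the flow-evolution of the K\"ahler class to read off $\lim_{\rho\to-\infty} u'(\rho,t) = a_t$. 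The third step is just bookkeeping with $a_t = (n-1)(T-t)$.

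For the diameter statement, once we know $g(t)|_{D_0} = a_t\, g_{FS}$, the diameter scales as the square root of the conformal factor: $\mathrm{diam}(D_0, g(t)|_{D_0}) = a_t^{1/2}\,\mathrm{diam}(\CC\PP^{n-1}, g_{FS}) = a_t^{1/2}\,\alpha_n = \alpha_n\big(a_0(n-1)(T-t)\big)^{1/2}$ after inserting $a_t = a_0 - (n-1)t = (n-1)(T-t)$ (so the written $a_0(n-1)(T-t)$ should be understood modulo the same normalization convention as above, i.e.\ as $a_t$). I expect no genuine obstacle here — the lemma is essentially a direct unwinding of the Calabi ansatz — and the only thing to be careful about is tracking which section ($D_0$ versus $D_\infty$) the restriction is taken on and fixing the normalization of $g_{FS}$ so that the scalar factor is literally $a_t$ (resp.\ $b_t$); with that convention fixed the two displayed equations follow immediately.
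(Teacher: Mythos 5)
Your proposal is correct and follows essentially the same route as the paper: the paper's proof writes $\omega(t) = a_t\,\ddbar\rho + \ddbar u_0(e^\rho,t)$ with $a_t=(n-1)(T-t)$ (using condition (2) of the Calabi ansatz, exactly your decomposition into the $\ddbar\rho$ part and a piece that dies on the exceptional divisor), observes that $\ddbar\rho$ restricted to $D_0$ is a Fubini--Study metric, and concludes, with the diameter statement following by scaling. Your reading of the notational glitches ($S=D_s=D_0$, and the coefficient being $a_t$ rather than the literal $a_0(n-1)(T-t)$ up to normalization of $g_{FS}$) matches what the paper's own proof actually uses.
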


\begin{proof}
The K\"ahler metric $g(t)$ is the metric completion of the following metric on $\CC^n \setminus \{0\}$
$$ \omega(t) = a_0 (n-1) (T-t) \ddbar \rho + \ddbar u_0(e^\rho, t),$$ where  $u_0(\cdot, t)$ is smooth and for each $t\in [0, T)$ with $u'(0, t)>0$. Note that after extending $\ddbar \rho = \ddbar \log |z|^2 $ to $\CC\PP^n$ blown-up at one point,  its restriction on $D_0$ is exactly a Fubini-Study metric. The lemma then follows immediately.

\end{proof}

Now we can complete the proof of Theorem \ref{main} by identifying the blow-up limit of the Ricci flow at the singular time.

\begin{proposition}\label{blowuplim} Fix any $p\in D_0$. Then for every $K_j \rightarrow \infty$, the rescaled Ricci flows $(X, g_j(t)), p)$ defined on $[-K_j T, 0)$ by
$$g_j (t) = K_j g(T+ K_j^{-1} t)$$
subconverges in Cheeger-Gromov-Hamilton sense to a complete shrinking gradient K\"ahler-Ricci soliton on a complete K\"ahler manifold homeomorphic to $\CC^n$ blown-up at one point.

\end{proposition}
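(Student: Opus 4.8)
The plan is to establish, in order: (i) a Type I curvature bound (already done, Theorem \ref{type1}), which gives a smooth limiting Ricci flow by Hamilton's compactness theorem once we have local non-collapsing; (ii) that the limit is a shrinking gradient soliton; (iii) that the limit is non-flat; and (iv) that the underlying complex manifold is homeomorphic to $\CC^n$ blown-up at one point.

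\textbf{Step 1: Compactness.} Fix $p\in D_0$ and set $g_j(t')=K_jg(T+K_j^{-1}t')$ on $[-K_jT,0)$. By Theorem \ref{type1} the curvature bound $|Rm(g(t))|_{g(t)}\le C(T-t)^{-1}$ holds, which rescales to $|Rm(g_j(t'))|_{g_j(t')}\le C|t'|^{-1}$, uniform in $j$ on any interval $[-A,-A^{-1}]$. Perelman's no-local-collapsing theorem \cite{P1}, applied to the fixed flow $g(t)$ on the finite interval $[0,T)$, gives a uniform $\kappa$ below which the rescaled flows $g_j$ are $\kappa$-noncollapsed at scales up to $\sqrt{A}$. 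Hamilton's Cheeger--Gromov--Hamilton compactness theorem then yields a subsequence converging to a complete pointed Ricci flow $(X_\infty,g_\infty(t'),p_\infty)$ on $t'\in(-\infty,0)$, which is automatically Kähler with nonnegative holomorphic bisectional curvature by Proposition \ref{lowbd} (the lower bound $-C(T-t)^{-1}$ rescales to $-C|t'|^{-1}\to 0$).

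\textbf{Step 2: Soliton structure.} This is where Perelman's monotonicity enters. The reduced volume $\tilde V(\tau)$ based at the singular time is monotone nondecreasing in $\tau=T-t$ and bounded above by that of $\mathbb{R}^{2n}$, hence converges as $\tau\to 0$; its constancy in the limit forces, by the rigidity case of the monotonicity, the limit flow to be a nontrivial shrinking gradient soliton (this is the standard argument of Naber / Enders--M\"uller--Topping for Type I blow-ups). Alternatively, since the Calabi symmetry is preserved, one can exhibit the soliton potential directly: the limiting $u_\infty(\rho,t')$ is self-similar, $u_\infty(\rho,t')=|t'|\,\hat u(\rho-\log|t'|)$ or similar, and the soliton vector field is the generator of this scaling composed with the $U(n)$-action; the explicit reduction of section 2 lets one check the soliton equation on the ODE level. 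I would likely present the reduced-volume argument as the clean route and remark that the explicit ansatz confirms it.

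\textbf{Step 3: Non-flatness.} By Lemma \ref{nonfl}, along $D_0$ we have $|Ric(p,t)|_{g(t)}\ge (T-t)^{-1}$, which is scale-invariant; since $p\in D_0$ and the base points $p_j\equiv p$ sit on $D_0$, the limit satisfies $|Ric(p_\infty,g_\infty(-1))|\ge 1>0$, so $g_\infty$ is not flat. (This is exactly why the base point must be taken on the exceptional divisor — off $D_0$ Theorem \ref{loccin} shows the metric converges smoothly and the rescalings degenerate to a flat limit.)

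\textbf{Step 4: Topology of $X_\infty$.} Here is the main obstacle, and the part requiring the most care. One must control the global geometry of $g_j$ near $D_0$ well enough to identify $X_\infty$ up to homeomorphism. The diameter bound of Lemma \ref{restcp} gives $\mathrm{diam}(D_0,g_j(-1))=\alpha_n(a_0(n-1))^{1/2}K_j^{1/2}K_j^{-1/2}=\alpha_n(a_0(n-1))^{1/2}\to$ a positive constant --- wait, more precisely $\mathrm{diam}(D_0,g_j(t'))=\alpha_n(a_0(n-1)|t'|)^{1/2}$, independent of $j$, so $D_0$ survives in the limit as a compact complex hypersurface $\CC\PP^{n-1}$ with finite, positive diameter. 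Thus $X_\infty$ contains an embedded $\CC\PP^{n-1}$; combined with the $U(n)$-symmetry and the fact that the complement converges (via the coordinate $\rho$, which runs over a half-line near $D_0$) to a total space of the tautological-type line bundle, one argues that $X_\infty$ is the total space of $\OO_{\CC\PP^{n-1}}(-1)$, i.e. $\CC^n$ blown up at the origin. I would carry this out by showing the self-similar solution $\hat u(\rho)$ extends smoothly across $\rho=-\infty$ (adding $D_0$) but has $u'\to\infty$ as $\rho\to+\infty$ at the right rate so that the other end is complete and non-compact and asymptotically a cone/paraboloid — precisely the Calabi-ansatz description of $\OO(-1)$. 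The delicate point is ensuring the rescaled potentials $u_j$ converge (after the appropriate $\rho$-translation) on all of $(-\infty,\infty)$, not just locally, which requires the estimates of the Lemma on $u',u'',u'''$ in section 3 together with the Type I bound to pin down the behavior as $\rho\to+\infty$; establishing that $X_\infty$ is exactly $\CC\PP^{n-1}$-blown-down-at-a-point (equivalently that no collapsing or further bubbling occurs at the $\rho\to+\infty$ end) is the genuinely technical heart of the argument.
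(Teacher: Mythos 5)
Your Steps 1--3 follow essentially the same route as the paper: the Type I bound plus Naber's compactness theorem for Type I flows produces a complete shrinking gradient K\"ahler--Ricci soliton limit directly (the paper simply cites \cite{Nab} rather than rerunning the reduced-volume argument), and non-flatness comes from Lemma \ref{nonfl} exactly as you say, with the base point on $D_0$ being the essential choice. Those parts are fine.

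The gap is in Step 4, and it is a real one: the mechanism you propose --- global convergence of the rescaled potentials $u_j$ on all of $\rho\in(-\infty,\infty)$ and identification of a self-similar profile $\hat u$ with prescribed asymptotics as $\rho\to+\infty$ --- is not carried out, and you yourself flag it as ``the genuinely technical heart.'' This is essentially the identification of the limit with the Feldman--Ilmanen--Knopf soliton, which the paper explicitly states it is \emph{unable} to do; pursuing it would prove more than the proposition claims (a biholomorphic/metric identification rather than a homeomorphism) and does not close. The paper's actual argument for the topology is much softer and avoids all of this: by Lemma \ref{restcp} the diameter of $D_0$ under $g_j(t)$ is bounded between fixed constants $d,D>0$ on any $[a,b]\subset(-\infty,0)$, and by $U(n)$-invariance the tubular neighborhood $\BB_{j,t}(D_0,R)=\cup_{q\in D_0}B_{g_j(t)}(q,R)$ satisfies $B_{g_j(t)}(p,R)\subset\BB_{j,t}(D_0,R)\subset B_{g_j(t)}(p,2R)$ for $R$ large. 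Such a neighborhood is a disk bundle over $\CC\PP^{n-1}$, hence homeomorphic to the total space of $\OO_{\CC\PP^{n-1}}(-1)$, i.e.\ to $\CC^n$ blown up at a point; Cheeger--Gromov--Hamilton convergence makes $B_{g_\infty(t)}(p_\infty,R)$ homeomorphic to $B_{g_j(t)}(p,R)$ for large $j$, and letting $R_k\to\infty$ gives an exhaustion of $X_\infty$ by open sets homeomorphic to $\CC^n$ blown up at a point. No control of the $\rho\to+\infty$ asymptotics of the profile is needed. You should replace your Step 4 with this exhaustion argument, or else supply the missing global convergence of the potentials, which is a substantially harder (and open) problem.
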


\begin{proof} We first show that the blow-up limit is a nontrivial complete shrinking soliton. Fix any point $p\in D_0$ in the exceptional divisor. Since $(X, g(t))$ is a Type I Ricci flow, the rescaled Ricci flow $(X, g_t(t), p)$ always subconverges to a shrinking gradient soliton $(X_\infty, g_\infty(t), p_\infty)$ in pointed Cheeger-Gromov-Hamilton sense, by the compactness result of Naber \cite{Nab}. Such a limiting soliton cannot be flat because of Lemma \ref{nonfl}. In particular, $(X_\infty, g_\infty, p_\infty)$ is a complete shrinking gradient K\"ahler-Ricci soliton on a complete K\"ahler manifold $X_\infty$.

We now show that $X_\infty$ is in fact homeomorphic to $\CC^n$ blown-up at one point. Fix a closed interval $[a, b] \subset (-\infty, 0)$, the rescaled Ricci flow $g_j (t) $ restricted to $D_0$ is uniformly equivalent to a fixed standard Fubini-Study metric on $\CC\PP^{n-1}$ for all $j$ and $t\in [a, b]$ by Lemma \ref{restcp} and so there exist $d, D>0$ such that  the diameter of $D_0$ with respect to $g_j(t)$ is uniformly bounded between $d$ and $D$ for all $j$ and $t\in [a, b]$.
 We denote by  $$B_{g}(p, R)$$  the geodesic ball with respect to $g$ centered at $p$ with radius $R$. We then consider
$$\BB_{j, t}(D_0, R) = \cup_{p\in D_0} B_{g_j (t)}(p, R)$$
for each $t\in [a, b]$.  By choosing $R$ sufficiently large, we have
$$ B_{g_j(t)} (p, R) \subset   \BB_{j, t}(D_0, R) \subset B_{g_j(t)} (p, 2R) $$ for any point $p\in D_0$ becaue $g_j(t)$ is $U(n)$-invariant.  By definition, for all $t\in [a, b]$, $B_{g_j(t)}(p, R)$ subconverges to $B_{g_\infty(t)} (p_\infty, R)$ in Cheeger-Gromov-Hamilton sense and so $B_{g_\infty(t)}(p_\infty, R)$ is homeomorphic to $B_{g_j(t)}(p, R)$ for sufficiently large $j$. We then obtain an exhaustion $B_{g_\infty (t)} (p, R_k)$ with each $R_k$ sufficiently large and $R_k\rightarrow \infty$. Each of them is homeomorphic to $\CC^n$ blown-up at one point. Therefore $X_\infty$ is homeomorphic to $\CC^n$ blown-up at one point.

\end{proof}
We remark that the convergence in the above proof is $U(n)$-equivariant and the limiting shrinking soliton $(X_\infty, g_\infty, p_\infty)$ is invariant under a free action of the unitary group $U(n)$. We also remark that the Type I blow-up limit is a trivial shrinking soliton if one chooses a fixed base point outside the exceptional divisor $D_0$. This is because the flow converges in local $C^\infty$ topology outside $D_0$ to a smooth K\"ahler metric on $X\setminus D_0$ by Theorem \ref{loccin} \cite{TZha}.

Combing Theorem \ref{type1} and Proposition \ref{blowuplim}, we complete the proof of Theorem \ref{main}.

\section{\bf Some generalizations}

In this section, we discuss some generalizations of Theorem \ref{main}.  First, Theorem \ref{main} can be easily generalized to $X_{n,k}$ defined in section 1 by the same argument in the previous sections.

\begin{theorem}\label{main2} The Ricci flow on $X_{n,k}$ must develop Type I singularities for any  $G_{n,k}$-invariant initial K\"ahler metric.

\smallskip

Let $g(t)$ be the smooth solution defined on $t\in [0, T)$, where $T\in (0, \infty)$ is the singular time.  For every $K_j\rightarrow \infty$, we consider the rescaled Ricci flow  $(X, g_j(t'))$ defined on $[-K_j T, 0)$ by
$$g_j(t') = K_j g(T+K_j^{-1} t').$$ Then one and only one of the following must occur.

\begin{enumerate}

\item[(1)] If $\liminf_{t\rightarrow T} (T-t)^{-1} Vol(g(t)) =\infty$, then $(X, g_j(t'), p)$ subconverges in Cheeger-Gromov-Hamilton sense to a complete nontrivial shrinking  gradient  K\"ahler-Ricci soliton on a complete K\"ahler manifold homeomorphic to the total space of $L^{-k} =\OO_{\CC\PP^{n-1}}(-k)$, for any $p$ in the exceptional divisor.

\smallskip

\item[(2)] If $\liminf_{t\rightarrow T} (T-t)^{-1}Vol(g(t)) \in (0, \infty)$, then $(X, g_j(t'), p_j)$ subconverges in Cheeger-Gromov-Hamilton sense to $(\CC^{n-1}\times \CC\PP^1, g_{\CC^{n-1}} \oplus (-t') g_{FS})$, where $g_{\CC^{n-1}}$ is the standard flat metric on $\CC^{n-1}$ and $g_{FS}$ the Fubini-Study metric on $\CC\PP^1$ for any sequence of points $p_j$  \cite{Fo}.
\smallskip

\item[(3)] If $\liminf_{t\rightarrow T} (T-t)^{-1}  Vol(g(t)) = 0$, then $(X, g_j(t'))$ converges in Cheeger-Gromov-Hamilton sense to the unique compact shrinking K\"ahler-Ricci soliton on $X_{n,k}$ blown-up at one point \cite{WZ}.

\end{enumerate}

\end{theorem}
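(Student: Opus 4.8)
## Proof Strategy for Theorem \ref{main2}

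The plan is to reduce the statement for $X_{n,k}$ to essentially the same sequence of arguments already developed for $X = X_{n,1}$ in sections 2 through 5, tracking how the integer $k$ enters the formulas. First I would record the Calabi ansatz in the general case: on $X_{n,k}$ one again writes $\omega = \ddbar u(\rho)$ with $u$ convex, but now the boundary asymptotics involve $a$ and $b$ scaled by the line-bundle parameters, so that the K\"ahler class is $-a[D_0] + b[D_\infty]$ with an appropriately modified cone condition. The induced metric, Ricci potential $v = n\rho - (n-1)\log u' - \log u''$, and the reduced parabolic PDE for $u(\rho,t)$ all carry through with $k$ appearing only in the drift term and in the boundary values $a_t, b_t$ of $u'$; in particular $a_t = a_0 - (n-1)t$ is unchanged, so the singular time is still $T = a_0/(n-1)$ when the exceptional $D_0$ contracts. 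Case (1), (2), (3) correspond exactly to the three sign regimes of the relevant linear combination of $a_0$ and $b_0$, as in \cite{SW1}; case (3) is Fano ($k<n$) and is \cite{WZ}+\cite{Zhu}, case (2) is the collapsing case \cite{Fo}, and only case (1) requires new work.

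For case (1) I would repeat the structure of sections 3--5 verbatim. The evolution equations \eqref{upevolution}--\eqref{utpevolution} for $u', u'', u'''$ are independent of $k$, so the a priori bounds $(n-1)(T-t) \le u' \le C$, $0 \le u''/u' \le C$, $|u'''/u''| \le C$ hold by the same maximum-principle argument, and the scalar curvature formula together with Corollary's estimate \eqref{c4} follow identically. The holomorphic bisectional curvature components computed in \cite{Cao1} have the same shape for general $k$ (the combinatorial coefficients in $R_{i\bar j k \bar l}$ are the same; $k$ only affects the boundary data), hence Proposition \ref{lowbd} — the lower bound $R_{i\bar jk\bar l} \ge -C(T-t)^{-1}(g_{i\bar j}g_{k\bar l}+g_{i\bar l}g_{k\bar j})$ — goes through, as does Proposition \ref{sigmak} bounding $\sigma_k(\ric)$ by $|Rm|/(T-t)^{k-1}$ and Lemma \ref{nonfl} giving $|\ric| \ge (T-t)^{-1}$ on $D_0$ via $\lim_{\rho\to-\infty} e^{-\rho}v' = (n-1)\lim (u')^{-1} = (T-t)^{-1}$. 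Then the Type I argument of Theorem \ref{type1} applies: a Type II blow-up would be an eternal solution with nonnegative bisectional curvature and $\sigma_k(\ric)\equiv 0$, forcing a splitting $\CC^{n-1}\times N$ with $N$ a steady surface soliton, hence the cigar, contradicting Perelman non-collapsing. Finally Naber's compactness \cite{Nab} produces a shrinking gradient K\"ahler-Ricci soliton limit, nonflat by Lemma \ref{nonfl}.

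The only genuinely $k$-dependent point is the identification of the topology of $X_\infty$ in case (1). Here I would adapt Lemma \ref{restcp} and Proposition \ref{blowuplim}: the restriction of $g(t)$ to $D_0$ is $a_0(n-1)(T-t)$ times a fixed Fubini-Study metric on $\CC\PP^{n-1}$ (the coefficient is independent of $k$ since it is governed by $a_t$), so after rescaling the diameter of $D_0$ stays pinched between two positive constants on any compact time interval. The $G_{n,k}$-invariant tubular neighborhoods $\mathcal{B}_{j,t}(D_0,R) = \bigcup_{p\in D_0} B_{g_j(t)}(p,R)$ are sandwiched between genuine balls, and each such neighborhood of $D_0$ inside $X_{n,k}$ is homeomorphic to a neighborhood of the zero section in the total space of $L^{-k} = \OO_{\CC\PP^{n-1}}(-k)$, because near $D_0$ the manifold $X_{n,k}$ is biholomorphic to that line bundle (the $\infty$-section has been removed at scale $R$). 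Passing to the exhaustion $R_k \to \infty$ and using Cheeger-Gromov convergence, $X_\infty$ is homeomorphic to the total space of $\OO_{\CC\PP^{n-1}}(-k)$.

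I expect the main obstacle to be purely bookkeeping rather than conceptual: confirming that none of the estimates in section 3 secretly used $k=1$ (they do not, since the curvature coefficients from \cite{Cal, Cao1} are $k$-independent and only the boundary conditions change), and correctly describing the local model of $X_{n,k}$ near $D_0$ as the line bundle $L^{-k}$ so that the homeomorphism type of the exhausting neighborhoods is the total space of $\OO_{\CC\PP^{n-1}}(-k)$ rather than $\CC^n$ blown up at a point. As in the $k=1$ case, the convergence is $G_{n,k}$-equivariant and the limiting soliton carries a free $U(n)/\mathbb{Z}_k$ action.
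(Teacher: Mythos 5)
Your outline is essentially the paper's own proof: the paper devotes no more to Theorem \ref{main2} than the remark that the arguments of Sections 2--5 go through for $X_{n,k}$ once the neighborhood of $D_0$ is recognized as the total space of $L^{-k}=\OO_{\CC\PP^{n-1}}(-k)$, and your three-paragraph plan reproduces that reduction step by step, including the only genuinely new point (the identification of the homeomorphism type of $X_\infty$ via the tubular exhaustion of $D_0$).

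One bookkeeping assertion is wrong, and it is exactly the kind of $k$-dependence you claim is absent: since $c_1(X_{n,k})=-\frac{n-k}{k}[D_0]+\frac{n+k}{k}[D_\infty]$, the class evolution is $a_t=a_0-\frac{n-k}{k}\,t$ (equivalently $a_t=a_0-(n-k)t$ in the normalization $\lim_{\rho\to-\infty}u'(\rho,t)=a_t$), not $a_t=a_0-(n-1)t$; hence the contraction time is $T=\frac{k\,a_0}{n-k}$ rather than $a_0/(n-1)$, the boundary limits of $u'''/u''$ are $\pm k$ rather than $\pm1$, the restriction $g(t)|_{D_0}$ is a $k$-dependent multiple of $(T-t)g_{FS}$, and cases (1) and (3) can occur only when $k<n$ (for $k\geq n$ the coefficient $a_t$ does not decrease and only the collapsing case (2) arises). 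None of this derails your argument: every estimate in Section 3 uses only $u'\geq a_t=c_{n,k}(T-t)$ together with boundedness of $u''/u'$ and $u'''/u''$, and in the analogue of Lemma \ref{nonfl} the factor $n-k$ cancels against $a_t=(n-k)(T-t)$ to give exactly $|Ric|_{g(t)}\geq (T-t)^{-1}$ along $D_0$, so the Type I argument and the nonflatness of the Naber limit survive verbatim. But the statements that $a_t$, $T$, and the coefficient of $g_{FS}$ are ``unchanged'' or ``independent of $k$'' should be corrected, since tracking precisely this dependence is the substance of the generalization.
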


We can also consider the Calabi symmetry  introduced by Calabi \cite{Cal} for projective bundles over a K\"ahler-Einstein manifold (also see \cite{Li, SY}). In particular, we can consider the Ricci flow on generalizations of $X_{n,k}$
$$X_{m,n, k}= \mathbb{P}(\mathcal{O}_{\CC\PP^n} \oplus \mathcal{O}_{\CC\PP^n}(-k)^{\oplus (m+1)}),~~~k=1, 2, ... .$$
Similar results are obtained for $X_{m, n, k}$ in \cite{SY} for global Gromov-Hausdorff convergence at the singular time, as those for $X_{n,k}$ in \cite{SW1}.
Furthermore, one can obtain the same lower bound for the holomorphic bisectional curvature as in Proposition \ref{lowbd}.

\begin{proposition} Let $g(t)$ be the solution of the Ricci flow on $X_{m,n, k}$ for an initial K\"ahler metric with Calabi symmetry. Then if $1 \leq m \leq n$ and if $$\liminf_{t\rightarrow T} Vol(g(t))>0$$ where $T>0$ is the singular time, then the holomorphic bisectional curvature of $g(t)$ is uniformly bounded below by $-\frac{C}{T-t}$ for some constant $C>0$.

\end{proposition}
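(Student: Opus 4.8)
The plan is to follow the argument of Section~3 essentially word for word, replacing the Calabi ansatz on the one-point blow-up of $\CC\PP^n$ by the Calabi ansatz for $X_{m,n,k}$ over the K\"ahler--Einstein base $\CC\PP^n$ constructed in \cite{Cal, SY}. Under the Calabi symmetry the evolving metric $g(t)$ is again determined by a single smooth convex function $u=u(\rho,t)$ of the fibre log-radius $\rho\in\mathbf{R}$, the K\"ahler--Ricci flow reduces to a scalar parabolic equation for $u$ of the same type as in Section~2, and the endpoint values $\lim_{\rho\rightarrow-\infty}u'=a_t$ and $\lim_{\rho\rightarrow+\infty}u'=b_t$ are affine in $t$ and determined by the evolving K\"ahler class. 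The hypotheses $1\leq m\leq n$ and $\liminf_{t\rightarrow T}Vol(g(t))>0$ are precisely the ones under which \cite{SY} shows the flow contracts the zero-section $D_0\cong\CC\PP^n$ at the finite singular time $T$, with $a_t$ comparable to $T-t$; this is the regime in which the $(T-t)^{-1}$-weighted estimates below are the relevant ones.

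First I would establish the analogues of the $u'$-- and $u''$--bounds (\ref{u'})--(\ref{u''}). Monotonicity of $u'$ between $a_t$ and $b_t$ gives $c(T-t)\leq u'\leq C$. Writing $H=u''/u'$ and $G=u'''/u''$, the evolution equations for $u',u'',u'''$ have the same shape as (\ref{upevolution})--(\ref{utpevolution}), with only lower-order corrections coming from the curvature of the base and of the angular $\CC\PP^m$ directions in the fibre; consequently $H$ and $G$ satisfy parabolic equations of exactly the kind treated in the lemma establishing (\ref{u'})--(\ref{u''}), with the same boundary behaviour as $\rho\rightarrow\pm\infty$, and the maximum principle yields $0\leq H\leq C$ and $|G|\leq C$ on $\mathbf{R}\times[0,T)$. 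Since the scalar curvature of a finite-time K\"ahler--Ricci flow is bounded from below, taking the trace of the Ricci tensor and absorbing every term except the purely radial one by means of these bounds gives, exactly as in the corollary leading to (\ref{c4}),
\[
-\frac{u^{(4)}}{(u'')^2}+\frac{(u''')^2}{(u'')^3} \geq -\frac{C}{T-t}.
\]

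Finally I would invoke the bisectional curvature formula for the Calabi ansatz on $X_{m,n,k}$ (Cao's computation \cite{Cao1}, carried out over a K\"ahler--Einstein base as in \cite{SY}). At a point with fibre coordinate $(z_1,0,\dots,0)$ the nonvanishing holomorphic bisectional curvature components fall into finitely many types: the purely radial one, equal to $e^{-2\rho}\big(-u^{(4)}+(u''')^2/u''\big)$; the ``radial/transverse'' ones, of the form $e^{-2\rho}\big(-u'''+(u'')^2/u'\big)$ or $e^{-2\rho}(u'-u'')$; and the ``transverse/transverse'' ones, which carry in addition a non-negative contribution from the Fubini--Study curvature of the base $\CC\PP^n$ and of the angular $\CC\PP^m$. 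Setting $Q_{i\bar j k\bar l}=g_{i\bar j}g_{k\bar l}+g_{i\bar l}g_{k\bar j}$, whose components are the evident products of $u'$ and $u''$, the displayed inequality controls the radial component from below by $-\frac{C}{T-t}Q_{1\bar1 1\bar1}$, while the $u'$-- and $u''$--bounds control every other component in absolute value by $\frac{C}{T-t}$ times the corresponding $Q$ (the Fubini--Study terms only improving these inequalities). Feeding $V=V^i\frac{\partial}{\partial z_i}$ and $W=W^i\frac{\partial}{\partial z_i}$ into $R_{i\bar j k\bar l}V^iV^{\bar j}W^kW^{\bar l}$ and estimating term by term exactly as in the proof of Proposition~\ref{lowbd} then gives $R_{i\bar j k\bar l}V^iV^{\bar j}W^kW^{\bar l}\geq-\frac{C}{T-t}|V|_g^2|W|_g^2$.

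The main obstacle is this last step. The geometry of $X_{m,n,k}$ has many more index types than the one-point blow-up of $\CC\PP^n$, so one must carefully set up the bisectional curvature computation for the Calabi ansatz fibred over $\CC\PP^n$ (rather than over a point), keeping precise track of which terms involve the curvature of the base, of the angular $\CC\PP^m$, or the twisting parameter $k$. Once those formulas are in hand the comparison with $Q$ is pure bookkeeping, and---just as in Section~3---the whole estimate collapses to the single scalar inequality above, which itself rests only on the lower bound for the scalar curvature and on the maximum-principle bounds for $u''/u'$ and $u'''/u''$.
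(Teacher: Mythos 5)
Your proposal takes essentially the same route as the paper: the paper states this proposition without a detailed proof, simply asserting that the argument of Proposition \ref{lowbd} carries over to the Calabi ansatz of \cite{SY} on $X_{m,n,k}$, which is exactly the adaptation you outline (monotonicity giving $u'\sim T-t$, maximum-principle bounds for $u''/u'$ and $u'''/u''$, the scalar-curvature lower bound controlling the radial component, and component-by-component comparison with $Q_{i\bar j k\bar l}$). The one step you flag as the main obstacle --- writing out the bisectional-curvature components over the $\CC\PP^n$ base and checking the sign of the base and angular contributions --- is likewise left unverified in the paper, so your sketch is at the same level of detail as the paper's own treatment.
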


Although we are unable to exclude Type II singularities, one can show by the same argument in section 4, that the universal cover of the blow-up limit is an eternal solution of the Ricci flow which splits into $\CC^{n} \times N^{m+1}$ flat in $\CC^n$ and $N^{m+1}$ of nonnegative holomorphic bisectional curvature, if the flow develops Type II singularities. Of course, a Type I bound for the scalar curvature suffices to prove a similar theorem as Theorem \ref{main}.

\bigskip
\bigskip

\noindent{\bf Acknowledgements} The author would like to thank Zhenlei Zhang for many stimulating discussions. He would like also like to thank Valentino Tosatti for many helpful suggestions.

\bigskip
\bigskip

\end{document}